\documentclass[11pt]{amsart}
\usepackage{geometry}                % See geometry.pdf to learn the layout options. There are lots.
\geometry{letterpaper}                
\usepackage{graphicx}
\usepackage{amssymb}
\usepackage{amsmath}

\title[Laguerre eigenvalue density on short scales]{Convergence of the eigenvalue density for $\beta-$Laguerre ensembles on short scales}
\author{Philippe Sosoe and Percy Wong}

\newtheorem{theorem}{Theorem}
\newtheorem{proposition}{Proposition}[section]
\newtheorem{corollary}[theorem]{Corollary}
\newtheorem{lemma}[proposition]{Lemma}

\theoremstyle{definition}

\begin{document}

\begin{abstract}In this note, we prove that the normalized trace of the resolvent of the $\beta$-Laguerre ensemble eigenvalues is close to the Stieltjes transform of the Marchenko-Pastur (MP) distribution with very high probability, for values of the imaginary part greater than $m^{-1+\epsilon}$. As an immediate corollary, we obtain convergence of the one-point density to the MP law on short scales. The proof serves to illustrate some simplifications of the method introduced in our previous work \cite{sosoewong} to prove a local semi-circle law for Gaussian $\beta$-ensembles.
\end{abstract}

\maketitle 

\section{Introduction}
Consider an $m \times n$ matrix $X$,  whose entries are i.i.d. complex Gaussian random variables with mean 0 and variance $\mathbb{E}|X_{ij}|^2=1$. The $m\times m$ matrix $H=XX^*$, the star $*$ denoting the conjugate transpose, is a \emph{Wishart matrix} \cite{wishart}. It is a classical result of random matrix theory that the distribution of the eigenvalues $\lambda_1, \ldots, \lambda_m$ of $H$ is given by the density $f_2(\lambda)$ on $\mathbb{R}^m$:
\[f_2(\lambda) = Z_{2,m}^{-1}\prod_{i<j}|\lambda_i-\lambda_j|^2 \cdot \prod_{i=1}^m \lambda_i^{n-m}e^{-\beta\sum^m_{i=1}\lambda_i/2}.\] 
Here $Z_{2,m}$ is a normalization factor.
Suppose the limit $m/n\rightarrow d$ exists for $0 <d\le 1$. Then the empirical distribution
\[F_{2,m}(x) =\frac{1}{m}\cdot \sharp \{1 \le i\le m: \lambda_i \le x \}\]
 of the eigenvalues of the rescaled matrix $H/m$ converges to the Marchenko-Pastur distribution, with density 
 \[\rho_{MP,d}(x)  = \frac{1}{2d\pi x} \sqrt{(\lambda_+-x)(x-\lambda_-)} \mathbf{1}_{[\lambda_-,\lambda_+]}(x),\]
where
 \[\lambda_\pm = (1\pm \sqrt{d})^2.\]
In fact, V.A. Marchenko and L.A. Pastur \cite{marchenkopastur} showed that weak convergence of the eigenvalue distribution also holds when the entries of $X$ are i.i.d. but not necessarily Gaussian. 

A. Edelman and I. Dumitriu \cite{edelmandumitriu} have introduced an infinite family of tridiagonal random matrix models, termed $\beta$-Laguerre matrices, which generalize the Wishart model, and have the explicit eigenvalue density
\begin{equation}
\label{eq: density}
f_{\beta,a}(\lambda) = Z_{\beta,m}^{-1}\prod_{i<j}|\lambda_i-\lambda_j|^\beta \cdot \prod_{i=1}^m \lambda_i^{a-\frac{\beta}{2}(m-1)-1}e^{-\beta \sum^m_{i=1}\lambda_i/2},
\end{equation}
for $0<\beta<\infty$. Here $a> \beta(m-1)/2$ is a real parameter. $Z_{\beta,m}$ is another normalization factor. The (Gaussian) Wishart eigenvalue distribution corresponds to the Laguerre ensemble with $\beta=2$ and $a=\beta n/2$. Almost-sure convergence of the eigenvalue density to the Marchenko-Pastur distribution with parameter
\[d = \lim \frac{m\beta}{2a}<1\]
was established for the eigenvalue distributions of the $\beta$-Laguerre matrix models by the moment method in \cite{dumitriuth}; that is, for any $a<b$:
\[F_{\beta,n}(b)-F_{\beta,n}(a)\rightarrow \int_{a}^b\,\rho_{MP,d}(s)\,\mathrm{d}s,\]
almost surely as $n\rightarrow \infty$. $F_{\beta,n}(x)$ is the eigenvalue distribution function, defined as in the case $\beta=2$ above.

The object of the present note is to extend these results on convergence of the eigenvalue distribution of general $\beta$-Laguerre ensembles to \emph{short} intervals $[a_m,b_m]$ such that $b_m-a_m =O(m^{-1+\epsilon})$, for $\epsilon>0$ arbitrary. 

The main result is the following:
\begin{theorem} \label{thm1} Let $\beta>0$, and $0 <d\le 1$. Set $a=m\beta/2d$. Let $\delta,\kappa, \epsilon >0$ be positive parameters, and $E\in (\lambda_-+\kappa,\lambda_+-\kappa)$. Denote by $N_I$ the number of eigenvalues of the $\beta$-Laguerre ensemble of size $m$ with parameter $a$ in the interval $I$. For any $k>0$, there exists a constant $C_{\delta,\kappa,\epsilon,k}$ such that:
\[\mathbb{P}\left(\left|\frac{1}{m}N_{[E-m^{-1+\epsilon},E+m^{-1+\epsilon}]}-\int_{E-m^{-1+\epsilon}}^{E+m^{-1+\epsilon}}\rho_{MP,d}(x)\,\mathrm{d}x\right|\ge \delta m^{-1+\epsilon}\right) \le C_{\delta,\kappa,\epsilon,k} m^{-k}.\]
\end{theorem}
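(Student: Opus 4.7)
The plan is to reduce the theorem to a local Marchenko--Pastur law: an estimate of the form
\[
|m_m(z)-m_{MP,d}(z)|\le m^{-\delta'},\qquad z=E+i\eta,\ \eta\ge m^{-1+\epsilon},
\]
with probability at least $1-C m^{-k}$, where $m_m(z)=\frac{1}{m}\sum_{i=1}^m(\lambda_i/m-z)^{-1}$ is the normalized trace of the resolvent of the rescaled ensemble, and $m_{MP,d}$ is the Stieltjes transform of $\rho_{MP,d}$. Given such a bound, the eigenvalue counting estimate follows by a standard smoothing argument: approximate the indicator of $[E-m^{-1+\epsilon},E+m^{-1+\epsilon}]$ by a smooth test function on the same scale, express it via the Helffer--Sj\"ostrand representation in terms of the resolvent, and control the resulting integral using the local law together with a trivial a priori bound $|\operatorname{Im} m_m(z)|\le 1/\eta$. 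The counting step is essentially deterministic given the Stieltjes transform bound, so the real work is the local law itself.

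To prove the local law, I would use the tridiagonal Dumitriu--Edelman representation $H=LL^T$, with $L$ bidiagonal and entries of the two nonzero diagonals independent $\chi$ random variables with explicit parameters of order $\sqrt{a}\sim\sqrt{m}$. Applying the Schur complement formula to remove one row/column at a time yields a short recursion for the diagonal entries $G_{ii}(z)$ of the resolvent, relating $G_{ii}$ to a rational expression in $G_{i\pm 1,i\pm 1}$ (or to the truncated resolvent on the remaining block) and in the chi-distributed bidiagonal entries. Averaging these recursions in $i$ produces a self-consistent equation for $m_m(z)$ with a random remainder; the deterministic limit of this equation, obtained by replacing each chi variable by its expectation, is precisely the fixed-point equation characterizing $m_{MP,d}(z)$.

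The probabilistic input is concentration for chi variables, which gives sharp, essentially sub-Gaussian control on the deviations of each tridiagonal entry from its mean. Combined with the short-range nature of the recursion (inherited from tridiagonality), this lets one bound the random remainder in the averaged self-consistent equation by a polynomial in $m^{-1}$ times a power of $\eta^{-1}$. Stability of the self-consistent equation then converts this remainder bound into a bound on $|m_m(z)-m_{MP,d}(z)|$; the relevant Jacobian is invertible uniformly on compact subsets of the bulk, which is where the hypothesis $E\in(\lambda_-+\kappa,\lambda_+-\kappa)$ enters.

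The main obstacle is descending from the scale $\eta\sim 1$ (where the local law is almost immediate from concentration) down to $\eta\sim m^{-1+\epsilon}$ without losing too many factors of $\eta^{-1}$. The plan is a bootstrap/continuity argument on a lattice of values of $\eta$: use monotonicity of $\eta\mapsto\eta\,\operatorname{Im} m_m(E+i\eta)$ and the bound at the previous scale to control the a priori size of $m_m$ and of the random remainder at the next scale, then invoke stability to improve the bound. Carefully balancing the gain from stability against the loss from $\eta^{-1}$ so that the argument closes at the target scale $m^{-1+\epsilon}$, uniformly in $E$ in the bulk, is the delicate step; once it is carried out, Theorem~\ref{thm1} follows immediately from the Helffer--Sj\"ostrand reduction of the first paragraph.
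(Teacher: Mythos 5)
Your proposal correctly identifies the overall architecture of the paper's proof --- reduction to a local law for the normalized resolvent trace, the tridiagonal Dumitriu--Edelman model, a Schur complement identity, concentration of $\chi$-variables, stability of the self-consistent equation in the bulk, and a multi-scale bootstrap from $\eta\sim 1$ down to $\eta\sim m^{-1+\epsilon}$ --- and your final step (passing from a Stieltjes transform bound to an eigenvalue counting bound) appears in the paper as Corollary~\ref{cor:ESYY}, stated in the Erd\H{o}s--Schlein--Yau form rather than via Helffer--Sj\"ostrand; this last difference is cosmetic.

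The core step of your plan, however --- averaging the Schur complement relations over $i$ to produce a self-consistent equation for $m_m(z)$ whose deterministic limit is the MP fixed point --- does not go through as stated, and the paper's actual argument is built precisely to avoid it. Unlike Wigner matrices, the tridiagonal $\beta$-Laguerre model is not statistically stationary in the index: the entries of $B_\beta$ are $\chi_{2a-(j-1)\beta}$ and $\chi_{\beta(m-j)}$, with parameters ranging from $O(m)$ down to $O(1)$ across the diagonal. Replacing each $\chi$ by its expectation in the recursion for $G_{ii}$ therefore yields an equation that varies with $i$, and averaging over $i$ does not close into a single fixed-point equation for $m_m(z)$; the non-stationarity is exactly what produces the Marchenko--Pastur limit rather than the semicircle. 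What the paper does instead (Propositions~\ref{prop:induct1}--\ref{prop:induct2}) is to work with the single diagonal entry $R^{\beta,d}_{11}(z)$: the Schur complement \eqref{eqn:R11} gives an approximate MP fixed-point relation for $R_{11}$ alone, and the substitute for your averaging step is the pair of estimates $R_{11}\approx s(z)$ and $\tfrac{m-1}{m}\widehat{R}_{11}\approx R_{11}$. These rest on a structural fact specific to $\beta$-ensembles, Proposition~\ref{prop:indep}: in the spectral decomposition $M_{\beta,d}=Q\Lambda Q^*$ one can choose $Q$ so that its first row is \emph{independent} of $\Lambda$ and distributed as a normalized vector of i.i.d.\ $\chi_\beta$'s. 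This is what makes $R_{11}=\sum_j q_j^2/(\lambda_j-z)$ amenable to the McDiarmid-type concentration of Proposition~\ref{mcdiarmid}, together with the second-moment bound of Lemma~\ref{lem:absest} and eigenvalue interlacing. Your proposal never invokes this independence of the spectral weights from the eigenvalues, and without it the concentration step you rely on to control the random remainder is not available, so the bootstrap does not close.
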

The proof of Theorem \ref{thm1} is obtained by combining Theorem \ref{thm:main} with Corollary \ref{cor:ESYY} in Section \ref{sec: inductive}. Following recent work of L. Erd\"os, B. Schlein, H.T. Yau and collaborators in the case of Wigner matrices (see for example \cite{erdosyau1}, \cite{erdosyau2}, \cite{erdosyau3}, \cite{erdosyau4}, \cite{erdosyau5}, \cite{erdosyau6}), our method is based on the study of the resolvent matrix of the Edelman-Dumitriu tridiagonal models. 

Let $M_{\beta,d}$ denote a normalized tridiagonal $\beta$-Laguerre matrix (see Section \ref{sec: expansion} for details on notation). The imaginary part of the trace of the resolvent
\[s_{\beta,d}(z)=\frac{1}{n}\operatorname{tr} (M_{\beta,d}-z)^{-1}.\]
provides an approximation for the eigenvalue distribution on scales comparable to the distance between $z$ and the spectrum of $M_{\beta,d}$. In \cite{sosoewong}, we showed how a resolvent expansion, together with an iterative argument based on the Schur complement identity could be used to derive a local version of the semi-circle law for the Gaussian $\beta$-ensembles. Tridiagonal models for these eigenvalue distributions appeared in \cite{edelmandumitriu}. In Proposition \ref{onequarterprop}, we use a resolvent expansion to show that $s_{\beta,d}(z)$ is close to the Stieltjes transform of $\rho_{MP,d}$ for values of $z$ away from the spectral edges and with $\Im z> m^{-1/4+\epsilon}$. This implies that Theorem \ref{thm1} holds for intervals $I$ of size $|I|\ge m^{-1/4+\epsilon}$. The argument in the present work is substantially simpler than the corresponding one in \cite{sosoewong}, where we proved a local convergence result on the scale $m^{-1/2+\epsilon}$ using a resolvent expansion and asymptotics for Hermite polynomials derived by the Riemann-Hilbert method. Instead of attempting to exploit the cancellation due to oscillation of the normalized Laguerre polynomials, we use a general off-diagonal resolvent estimate (see Lemma \ref{resolventest}). The computation of the limit of the normalized resolvent trace for the deterministic matrix corresponding to $\beta=\infty$ using Riemann-Hilbert asymptotics in \cite{sosoewong} has been replaced by a less involved derivation, see Section \ref{sec: infty}.

The iterative argument leading to Theorem \ref{thm:main} in Section \ref{sec: inductive} is similar to the one in \cite{sosoewong}. Note that Theorem \ref{thm:main} is deduced from Propositions \ref{prop:induct1} and \ref{prop:induct2}, without reference to Proposition \ref{onequarterprop}. In contrast, the local result on the intermediate scale $m^{-1/2+\epsilon}$ in \cite{sosoewong} was used as an input for an inductive argument to reach the scale $m^{-1+\epsilon}\ $\footnote{Modulo minor changes, the inductive argument in \cite{sosoewong} can also be used to obtain the semi-circle law down to scale $n^{-1+\epsilon}$.}. Although it can be entirely replaced by the iteration in Section \ref{sec: inductive}, we have chosen to present the argument for Proposition \ref{onequarterprop} because it provides an elementary alternative to the Schur complement approach to proving the Marchenko-Pastur law. The proof of Proposition \ref{onequarterprop} does not depend on the specific properties of the $\beta$-Laguerre ensembles other than concentration of the entries around their mean. It can be applied to fairly general tridiagonal models with independent entries to prove convergence of the eigenvalue distribution, and convergence up to some intermediate scale depending on the magnitude of the entries. We give examples of such extensions in Section \ref{sec: extensions}.

We end this section with some references to previous literature. Local versions of the Marchenko-Pastur law for the eigenvalue distribution of $XX^*$  when the entries of $X$ are independent but not necessarily Gaussian have appeared in \cite{maltsevschlein}, \cite{erdosyau4}, \cite{taovucov}. The first paper deals with the hard edge of the spectrum in case $d=1$. The use of a perturbative expansion around deterministic matrices associated to Laguerre polynomials already appears in \cite{edelmandumitriu2}, where the authors study fluctuations of the spectrum in the large $\beta$ limit. In \cite{popescu}, I. Popescu proves convergence and Gaussian fluctuations for the moments of tridiagonal matrices under a scaling assumption for the moments of the entries.  In contrast to the result in \cite{sosoewong} at the time of publication, Theorem \ref{thm1} appears to be new for general $\beta$.

\newpage 
\section{Tridiagonal models and resolvent expansion}
\label{sec: expansion}
We recall the main result from the work of A. Edelman and I. Dumitriu \cite{edelmandumitriu}, which will be our starting point. Let $m\in \mathbb{N}$, $\beta>0$ and choose $0<d\le 1$. Let $a=m\beta /(2d)$. Consider the bi-diagonal matrix
\[B_\beta = \left( \begin{array}{cccc} \chi_{2a} & & & \\
\chi_{\beta(m-1)} & \chi_{2a-\beta} & & \\
& \ddots  & \ddots & \\
& & \chi_\beta &\chi_{2a-\beta(m-1)}
 \end{array}\right).\]
In the equation above, the symbol $\chi_r$, $r>0$ represents a random variable with \emph{chi} distribution with $r$ degrees of freedom, defined by the probability density function
\[\frac{2^{1-\frac{r}{2}}}{\Gamma(r/2)}x^{r-1}e^{-\frac{x^2}{2}}.\]
$\Gamma$ denotes the Gamma function, defined for $x>0$ by
\[\Gamma(x)=\int_0^\infty t^{x-1}e^{-t}\,\mathrm{d}t.\]
The random variables appearing in the matrix $B_\beta$ are all independent. By \cite{edelmandumitriu}, the eigenvalue distribution of the real tridiagonal matrix $\tilde{M}_{\beta,d} = B_\beta  B_\beta^t$ is given by the density function $f_{\beta,a}$ in (\ref{eq: density}). We will be concerned with the scaled $\beta$-Laguerre matrix model, defined by
\[M_{\beta,d} = \frac{d}{\beta m}\tilde{M}_{\beta,d} = \frac{d}{\beta m}B_\beta B_\beta^t.\]
We will determine the behavior of the normalized trace of the resolvent, $s_{\beta,d}(z)$ in a neighborhood of the real axis in the upper half-plane $\mathbb{C}^+ =\{\Im z>0\}$:
\[s_{\beta,d}(z) = \frac{1}{m}\mathrm{tr}\,(M_{\beta,d} -z)^{-1}.\]
The imaginary part $\Im s_{\beta,d}(z)$ is the Poisson integral of the eigenvalue distribution. As such, it is an approximation to the empirical eigenvalue distribution of $M_{\beta,d}$ at scale $\Im z$. To prove Theorem \ref{thm1}, it will suffice to show that, with probability no less than $1-C_{\delta,\kappa,\epsilon,k}m^{-k}$, we have
\[|s_{\beta,d}(z)-s_{MP,d}(z)|<\delta,\]
where
\[s_{MP,d}(z) = \int \frac{1}{x-z} \,\rho_{MP,d}(x)\mathrm{d}x= \frac{z+1-d}{2dz}+\frac{\sqrt{(z-\lambda_-)(z-\lambda_+)}}{2dz}\]
is the Stieltjes transform of the Marchenko-Pastur distribution with parameter $d$, (see \cite{erdosyau5}, Lemma B.1 or \cite{erdosyau6} 7.1, as well as Corollary \ref{cor:ESYY} below).

We begin by writing
\begin{equation}\label{eq: sum}M_{\beta,d} = B_\beta B_{\beta}^t = M_{\infty,d} + \Delta,\end{equation}
where $M_{\infty,d}$ is the matrix
\[\frac{d}{m}\left(\begin{array}{cccc} m/d & \sqrt{m/d}\sqrt{m-1} &  &  \\
\sqrt{m/d}\sqrt{m-1} &m/d+m-2 & \sqrt{m/d-1}\sqrt{m-2} & \\
 & \ddots & &\\
& \sqrt{m/d-m+2}\sqrt{2} & m/d-m+4 & \sqrt{m/d-m+1}\sqrt{1}\\
& & \sqrt{m/d-m+1}\sqrt{1} & m/d-m+2 
 \end{array} \right).\]
The entries of the tridiagonal matrix $\Delta$ are
\begin{align*}
\Delta_{jj} &= \frac{d\chi^2_{2a-(j-1)\beta}-\beta m +d(j-1)\beta}{\beta m}+\frac{d\chi^2_{\beta(m-j+1)}-d\beta (m-j+1)}{\beta m},\\
\Delta_{jj-1} &= d\frac{\chi_{2a-(j-1)}\chi_{\beta(m-j)}-\sqrt{m/d-j-1}\sqrt{m-j}}{m}.
\end{align*}
For large $m$, all the entries of $\Delta$ are simultaneously small in magnitude, with overwhelming probability:
\begin{align}\label{eq: deltasmall}
|\Delta_{jk}| \le m^{-1/2+c}, \quad 1\le j,k \le m.
\end{align}
for any $c>0$. Here and below, we will say an event $E=E(m)$ holds \emph{with overwhelming probability} if, for each $k$, there is a constant $C_k$ such that
\[\mathbb{P}(E^c)\le C_km^{-k}.\]
 This follows readily from the definitions, and properties of the $\chi_r$ and $\chi^2_r$ distributions, which are concentrated around their mean 
\begin{align*}
\mathbb{E}\chi_r &= \sqrt{2}\cdot \frac{\Gamma((r+1)/2)}{\Gamma(r/2)} = \sqrt{r}\cdot (1+O(1/r)),\\
\mathbb{E}\chi^2_r &=r,
\end{align*}
with exponential tails.

\subsection{The matrix $M_{\infty,d}$ and generalized Laguerre polynomials}\label{sec: laguerrepoly}
The spectral theory of the symmetric matrix $M_{\infty,d}$ can be described explicitly in terms of Laguerre polynomials. For $\alpha>-1$, the \emph{generalized Laguerre polynomials} $L^\alpha_k$, $k= 0,1, \ldots$ are orthogonal polynomials with respect to the measure
\[w_\alpha(x)\,\mathrm{d}x=x^\alpha e^{-x}\,\mathrm{d}x\]
on the positive real axis $\mathbb{R}^+ = [0,\infty)$. See \cite[Section 4.5]{bealswong}, \cite[Section 22.7]{astegun} $L^\alpha_k$ is normalized such as to have $L^2(\mathbb{R}^+,w_\alpha\mathrm{d}x)$-norm 1:
\[\int L^\alpha_k(x) L^\alpha_j(x) w_\alpha(x)\,\mathrm{d}x = \delta_{j,k}.\]
The eigenvalues of $M_{\infty,d}$ are the normalized zeros 
\begin{equation}\label{eqn: ev-zero}
\lambda_i=d\cdot l_i/m
\end{equation}
of the $m$th generalized Laguerre polynomial with parameter
\[\alpha = m\cdot\left(\frac{1}{d}-1\right).\] 
There is a complete set of corresponding eigenvectors $v_i$ of the form
\begin{equation}\label{eq: evector}
v_i=\left(\begin{array}{c} L^{m(1/d-1)+1}_{m-1}(l_i)\\ L^{m(1/d-1)+1}_{m-2}(l_i)\\ \vdots \\ L^{m(1/d-1)+1}_0(l_i) \end{array} \right),\end{equation}
for $0\le i \le m-1$. We denote by $u_i$ the normalized eigenvectors
\[u_i=\frac{v_i}{\|v_i\|}.\]
To derive the above facts regarding the eigenvalues and eigenvectors of $M_{\infty,d}$, we start from the three term recurrence relation for the Laguerre polynomials $\tilde{L}^\alpha_k$, normalized to have leading coefficient
\[(-1)^k\frac{1}{k!}.\]
The three term recurrence relation reads (see \cite[Section 4.11, p. 143]{bealswong})
\begin{equation}\label{eqn: threeterm}
x\tilde{L}_k^\alpha(x)=-(k+1)\tilde{L}^\alpha_{k+1}(x)+(2k+\alpha+1)\tilde{L}^\alpha_k(x)-(k+\alpha)\tilde{L}^\alpha_{k-1}(x).
\end{equation}
Letting $k=m-j$ for $j=1,\ldots m-1$ and replacing $\alpha$ by $\alpha+1=m(1/d-1)+1$ in \eqref{eqn: threeterm}, we find:
\begin{equation}\label{eqn: threeterm2}
\begin{split}
x\tilde{L}_{m-j}^{\alpha+1}(x)&=-(m-j+1)\tilde{L}^{\alpha+1}_{m-j+1}(x)+(m/d+m-2j+2)\tilde{L}^{\alpha+1}_{m-j}(x)\\
&\quad -(m/d-j+1)\tilde{L}^{\alpha+1}_{m-j-1}(x).
\end{split}
\end{equation} 
The polynomials $\tilde{L}^\alpha_k$ are related to the orthonormal polynomials $L^\alpha_k$ by 
\begin{equation}\label{eqn: laguerrenormalization}
L^\alpha_{k}(x) = (-1)^k \frac{\sqrt{k!}}{\sqrt{\Gamma(\alpha+k+1)}}\tilde{L}^\alpha_k(x),
\end{equation}
see \cite[Section 4.5]{bealswong}.
Multipliying \eqref{eqn: threeterm2} by 
\[\frac{\sqrt{(m-j)!}}{\sqrt{\Gamma(\alpha+m-j+2)}}.\]
and using \eqref{eqn: laguerrenormalization} and the relation $\Gamma(x+1)=x\Gamma(x)$, we find
\begin{equation}\label{eqn: threeterm3}
\begin{split}
xL_{m-j}^{\alpha+1}(x)&=\sqrt{m-j+1}\sqrt{m/d-j+1}\cdot L^{\alpha+1}_{m-j+1}(x)\\
&\quad +(m/d+m-2j+2)\cdot L^{\alpha+1}_{m-j}(x)\\
&\quad +\sqrt{m-j}\sqrt{m/d-j+1}\cdot L^{\alpha+1}_{m-j-1}(x).
\end{split}
\end{equation}
Letting $x=l_i$, where $l_i$ is one of the $m$ (real) zeros of $L^{\alpha+1}_m(x)$, it follows that 
\begin{equation}\label{eqn: eigenv}
\left(M_{\infty,d}\,v_i\right)_j= \left(\frac{d}{m}l_i \cdot v_i\right)_j
\end{equation}
for $j=2,\ldots, m-1$.
On the other hand, when $j=1$, we use the recurrence relation \cite[(22.7.30)]{astegun}:
\[\tilde{L}_k^{\alpha}=\tilde{L}^{\alpha+1}_k-\tilde{L}^{\alpha+1}_{k-1}\]
in \eqref{eqn: threeterm2} before normalizing. Since $L^\alpha_m(l_i)=0$, equation \eqref{eqn: threeterm3} then simplifies to
\begin{equation}\label{eqn: twoterm1}
l_i\cdot L_{m-1}^{\alpha+1}(l_i)=(m/d)\cdot L^{\alpha+1}_{m-1}(l_i) + \sqrt{m-1}\sqrt{m/d}\cdot L^{\alpha+1}_{m-2}(l_i),
\end{equation}
which yields 
\[\left(M_{\infty,d}\,v_i\right)_1= \left(\frac{d}{m}l_i \cdot v_i\right)_1.\]
Similar reasoning when $j=m$ shows that \eqref{eqn: eigenv} holds for all $j=1,\ldots,m$.

\subsection{The resolvent expansion}
In this section we show that the normalized trace of the resolvent is well approximated by the Stieltjes transform of the Marchenko-Pastur density for $\Im z > m^{-1/4+\epsilon}$, $\epsilon>0$. Specifically, we have the following
\begin{proposition}\label{onequarterprop}
Let $\epsilon, \kappa >0$. For any $z$ such that $\Im z> m^{-1/4+\epsilon}$ and
\[\lambda_-+\kappa <\Re z < \lambda_+-\kappa,\]
we have
\[|s_\beta(z)-s_\infty(z)| = O_{\epsilon,\kappa}(m^{-\epsilon/8})\]
with overwhelming probability.
\end{proposition}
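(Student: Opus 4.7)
The plan is to compare $G_\beta(z) := (M_{\beta,d}-z)^{-1}$ with $G_\infty(z) := (M_{\infty,d}-z)^{-1}$ via a Neumann expansion in the perturbation $-\Delta G_\infty$. From \eqref{eq: deltasmall} and the tridiagonality of $\Delta$ one has $\|\Delta\|_{\mathrm{op}} \le 3m^{-1/2+c}$ with overwhelming probability for any $c>0$, while $\|G_\infty\|_{\mathrm{op}} \le 1/\Im z \le m^{1/4-\epsilon}$. Choosing $c$ small enough relative to $\epsilon$ gives $\|\Delta G_\infty\|_{\mathrm{op}} \le 1/2$ with overwhelming probability, so
\[
G_\beta = \sum_{k\ge 0} G_\infty(-\Delta G_\infty)^k
\]
converges in operator norm. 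Taking the normalized trace then yields
\[
s_{\beta,d}(z) - s_\infty(z) = \sum_{k\ge 1}(-1)^k T_k, \qquad T_k := \frac{1}{m}\,\mathrm{tr}\big[(G_\infty \Delta)^k G_\infty\big],
\]
and the task reduces to estimating each $T_k$.

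For the tail $k\ge 2$, the crude bound $|T_k|\le \|\Delta\|_{\mathrm{op}}^k\|G_\infty\|_{\mathrm{op}}^{k+1}$ gives $|T_2| = O(m^{-1/4 + 2c - 3\epsilon})$, and the geometric remainder summed over $k\ge 3$ is dominated by the ratio $\|\Delta G_\infty\|_{\mathrm{op}} \le 1/2$. Hence $\sum_{k\ge 2}|T_k| = O(m^{-1/4+O(\epsilon)})$ with overwhelming probability, which is comfortably better than the target $m^{-\epsilon/8}$.

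The real work lies in the linear term, which unfolds as
\[
T_1 = -\frac{1}{m}\Big(\sum_j \Delta_{jj}(G_\infty^2)_{jj} + 2\sum_j \Delta_{j,j+1}(G_\infty^2)_{j,j+1}\Big).
\]
The diagonal and off-diagonal families each involve disjoint chi variables, so each of the two sums has independent summands; $\Delta_{jj}$ is exactly centered and $\mathbb{E}\Delta_{j,j+1}=O(1/m)$, and all entries have variance $O(1/m)$ with subexponential tails. The deterministic coefficients are controlled via the Ward identity
\[
\sum_j |(G_\infty^2)_{jk}|^2 \le \|G_\infty^2\|_{\mathrm{HS}}^2 \le \frac{\Im\,\mathrm{tr}\,G_\infty}{\eta^3} = \frac{m\,\Im s_\infty(z)}{\eta^3},
\]
combined with the a priori bulk bound $\Im s_\infty(z) = O_\kappa(1)$, a byproduct of the explicit computation of $s_\infty$ in Section~\ref{sec: infty}. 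This gives $\mathrm{Var}(T_1) = O(1/(m^2\eta^3))$ and $|\mathbb{E} T_1| = O(1/(m\eta^{3/2}))$. A Bernstein-type concentration inequality for sums of independent subexponential variables, after the usual truncation on the event $\{|\Delta_{jk}|\le m^{-1/2+c}\}$, then yields $|T_1| \le m^{\epsilon'}/(m\eta^{3/2}) = O(m^{-5/8 + O(\epsilon)})$ with overwhelming probability.

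The principal obstacle is this $T_1$ estimate: operator-norm bounds fall short and one must genuinely exploit cancellation among the centered entries of $\Delta$ against oscillatory resolvent coefficients. The general off-diagonal resolvent estimate (Lemma~\ref{resolventest}) enters here as the replacement for the Riemann--Hilbert Laguerre-polynomial asymptotics used in~\cite{sosoewong}, providing the structural control on $G_\infty$ needed to avoid a precise description of its eigenvectors.
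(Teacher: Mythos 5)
Your proposal is correct and takes a genuinely different route from the paper. The paper uses a finite resolvent expansion of length $l > 8/\epsilon$ plus a remainder, takes absolute values of every term, and controls the resulting index sums using the spectral decomposition of $M_{\infty,d}$ together with the off-diagonal exponential decay from Lemma~\ref{resolventest} (the Aizenman/Combes--Thomas bound). No cancellation in $\Delta$ is used; each expansion term is shown to be $O(m^{-k\epsilon/2})$. You instead use the full Neumann series (valid since $\|\Delta G_\infty\|_{\mathrm{op}}\le 1/2$ with overwhelming probability), crude operator-norm bounds for the tail $k\ge 2$, and for $T_1$ a concentration argument exploiting the centering of $\Delta$ together with a Ward-identity bound on $\|G_\infty^2\|_{\mathrm{HS}}$. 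The independence structure within the diagonal and off-diagonal families of $\Delta$ is indeed as you describe, the variance computation is correct, and the conclusion $|T_1| = O(m^{-5/8+O(\epsilon)})$ follows.

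Two remarks on the framing, however. First, your claim that ``operator-norm bounds fall short'' for $T_1$ is false at this scale: the same crude bound that handles $k\ge 2$ gives $|T_1|\le \|\Delta\|_{\mathrm{op}}\|G_\infty\|_{\mathrm{op}}^2 \le 3m^{c-2\epsilon}$, which for $c<\epsilon$ is already $O(m^{-\epsilon})$ --- better than the target $m^{-\epsilon/8}$. In other words, at $\Im z>m^{-1/4+\epsilon}$, the whole proposition follows from $|s_\beta-s_\infty| \le \|G_\beta - G_\infty\|_{\mathrm{op}} \le 2\|G_\infty\|_{\mathrm{op}}^2\|\Delta\|_{\mathrm{op}}$ with no expansion-term-by-term analysis at all; the concentration argument for $T_1$ is a genuine improvement but not needed for this statement (it would start to matter for $\Im z$ below $m^{-1/2}$). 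Second, your closing sentence attributing a role to Lemma~\ref{resolventest} is misplaced: your $T_1$ argument uses only the Ward identity $\|G_\infty^2\|_{\mathrm{HS}}^2 \le \eta^{-3}\Im\operatorname{tr}G_\infty$ and the bulk bound $\Im s_\infty = O_\kappa(1)$, and never invokes the off-diagonal decay of $R_{ij}$. In fact, the paper's use of Lemma~\ref{resolventest} is precisely what your approach is avoiding; the two proofs pay for $T_1$ in entirely different currencies, with the paper's approach being the one designed to generalize to smaller $\Im z$ once sharper entrywise bounds on $R_{ij}$ are available.
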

The choice of the scale $m^{-1/4+\epsilon}$ in Proposition \ref{onequarterprop} is somewhat arbritrary, given that the inductive argument of Section \ref{sec: inductive} will gradually improve any initial convergence result to the optimal scale $m^{-1+\epsilon}$. The exponent $\frac{1}{4}-\epsilon$ was chosen because it represents a scale accessible without detailed information on the size of the entries of the resolvent. Using the estimates for Laguerre polynomials in \cite{krasikovbound}, and the method of \cite{krasikovquadratic} to obtain a lower bound for $\|v_i\|$, one can prove sharper bounds than \eqref{eq: offdiag} for $|(M_{\infty,d}-z)^{-1}_{ij}|$ when $|i-j|\le m^{1/4}$. The expansion \eqref{eq: expansion} can then be summed for values of $\Im z$ smaller than $m^{-1/4}$, but still much larger than $m^{-1+\epsilon}$. We will not pursue this here.

The proof of Proposition \ref{onequarterprop} proceeds by a resolvent expansion comparing the trace of the resolvent of $M_{\beta,d}$ to that of the deterministic matrix $M_{\infty,d}$. The precise spectral information available for the matrix $M_{\infty,d}$ allows us to calculate the large $m$ limit of the normalized trace $s_\infty(z)$, and to control the resolvent expansion (\ref{eq: expansion}).
Starting from (\ref{eq: sum}), write:
\[(M_{\beta,d}-z)^{-1}=(M_{\infty,d}-z)^{-1}-(M_{\infty,d}-z)^{-1}\Delta(M_{\beta,d}-z)^{-1}.\]
Upon iteration, this yields
\begin{multline} (M_{\beta,d}-z)^{-1} = (M_{\infty,d}-z)^{-1}\\ +\sum_{k=1}^{l-1}\left(-(M_{\infty,d}-z)^{-1}\Delta\right)^k(M_{\infty,d}-z)^{-1} + (-(M_{\infty,d}-z)^{-1}\Delta)^{l}(M_{\beta,d}-z)^{-1}.
\end{multline}
Taking traces and normalizing:
\begin{multline}\label{eq: expansion} s_{\beta,d}(z)=s_\infty(z) + \frac{1}{m}\sum_{k=1}^{l-1}\operatorname{tr}\left(-(M_{\infty,d}-z)^{-1}\Delta\right)^k(M_{\infty,d}-z)^{-1}\\ + \frac{1}{m}\operatorname{tr}(-(M_{\infty,d}-z)^{-1}\Delta)^{l}(M_{\beta,d}-z)^{-1}.\end{multline}
By the estimate (\ref{eq: approx}) for the normalized trace $s_\infty(z)$ of $(M_{d,\infty}-z)^{-1}$, we can replace the first term on the right with $s_{MP,d}(z)$, introducing an $O_\kappa(m^{-\epsilon})$ error. It suffices to show that the remaining terms also decay at least at an $O(m^{-\epsilon})$ rate as $m\rightarrow \infty$.

For $1\le i,j\le m$, we let
\[R_{ij} = R_{ij}(z) := (M_{\infty,d}-z)^{-1}_{ij},\]
and
\[R_{ij}^{\beta,d}=R_{ij}^{\beta,d}(z):= (M_{\beta,d}-z)^{-1}_{ij}.\]
Expand the $k$th term in of the sum in (\ref{eq: expansion}) to find:
\[(-1)^k\sum_{1\le i_1,\ldots i_{k+1} \le m}R_{i_1i_2}\Delta_{i_2i_2'}R_{i_2',i_3} \cdots R_{i_{k}'i_{k+1}}\Delta_{i_{k+1}i_{k+1}'}R_{i_{k+1}'i_1},\]
where $i_l'=i_l$, $i_l+1$ or $i_l-1$.
Taking absolute values and using (\ref{eq: deltasmall}), this is bounded by
\begin{equation} \label{eq: oneterm}
m^{-(k+2)(1/2-c)}\sum_{1\le i_1,\ldots i_{k+1} \le m}|R_{i_1i_2}||R_{i_2',i_3}| \cdots |R_{i'_{k+1}i_1}|,
\end{equation}
with overwhelming probability.

The next lemma gives an estimate for the entries $R_{ij}$ of the resolvent which is efficient when $i$ and $j$ are widely separated.
\begin{lemma}\label{resolventest}
For any $1\le i, j \le m$, we have
\begin{equation}\label{eq: offdiag}
|R_{ij}(z)| \le C(\Im z)^{-1} \cdot \exp(-c_d (\Im z)\cdot |i-j|). \end{equation}
\end{lemma}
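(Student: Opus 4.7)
The natural tool is the Combes--Thomas estimate: conjugation of $M_{\infty,d}-z$ by a diagonal matrix of exponential weights produces an operator-norm perturbation of size $O(\eta)$, so the resolvent of the conjugated matrix retains a bound of order $1/\Im z$, and undoing the conjugation immediately yields exponential off-diagonal decay of the entries $R_{ij}(z)$.

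In detail, for a real parameter $\eta$ with $|\eta|\le 1$ to be chosen, let $D$ be the diagonal matrix with $D_{kk}=e^{\eta k}$, and set $A=M_{\infty,d}-z$. Using the tridiagonality of $A$, a direct calculation gives
\[D^{-1}AD=A+E(\eta),\qquad E(\eta)_{k,k+1}=(e^{\eta}-1)A_{k,k+1},\quad E(\eta)_{k+1,k}=(e^{-\eta}-1)A_{k+1,k},\]
with all other entries of $E(\eta)$ equal to zero. Inspecting the explicit form of $M_{\infty,d}$ shows that each off-diagonal entry is of the shape $\frac{d}{m}\sqrt{m/d-j+c_1}\sqrt{m-j+c_2}$ with $c_1,c_2\in\{0,1,2\}$, and is thus bounded by a constant $C_d$ depending only on $d$ (e.g.\ $\sqrt d$). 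Combined with $|e^{\pm\eta}-1|\le 2|\eta|$ and the Gershgorin row/column sum bound, this gives $\|E(\eta)\|_{\mathrm{op}}\le C'_d\,|\eta|$.

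Choose $|\eta|=c_d\,\Im z$ with $c_d=1/(2C'_d)$, so that $\|E(\eta)\|_{\mathrm{op}}\le \tfrac12\Im z$. Because $M_{\infty,d}$ is symmetric, $\|A^{-1}\|_{\mathrm{op}}\le(\Im z)^{-1}$, and a Neumann series in $E(\eta)A^{-1}$ yields $\|(A+E(\eta))^{-1}\|_{\mathrm{op}}\le 2/\Im z$. On the other hand the identity $(A+E(\eta))^{-1}=D^{-1}A^{-1}D$ shows that the $(i,j)$ entry of the left-hand side equals $e^{\eta(j-i)}R_{ij}(z)$; using the trivial bound $|M_{ij}|\le\|M\|_{\mathrm{op}}$ entrywise, we conclude
\[|R_{ij}(z)|\le \frac{2}{\Im z}\,e^{-\eta(i-j)}.\]
Taking $\eta=c_d(\Im z)\cdot\mathrm{sign}(j-i)$ produces the desired bound.

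The argument is almost entirely routine: the only step meriting any verification is the uniform boundedness of the off-diagonal entries of $M_{\infty,d}$ by a $d$-dependent constant, and this is immediate from their explicit form. No substantial obstacle is anticipated; the proof is a textbook application of Combes--Thomas in the tridiagonal setting.
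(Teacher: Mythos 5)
Your proof is correct and takes essentially the same approach as the paper: both rest on the Combes--Thomas estimate, using the tridiagonality and the uniform $O_d(1)$ bound on the off-diagonal entries of $M_{\infty,d}$. The only difference is cosmetic: the paper cites a ready-made version of Combes--Thomas due to Aizenman (Lemma II.1 of \cite{aizenman}), whereas you re-derive it from scratch via exponential conjugation and a Neumann series. (One tiny caveat: your choice $|\eta|=c_d\,\Im z$ tacitly assumes $\Im z$ is small enough that $|\eta|\le 1$; for large $\Im z$ one should cap $|\eta|$ at $1$, but this is the regime where the bound is trivial anyway.)
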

Several results on exponential decay of the resolvent entries could be used to obtain Lemma \ref{resolventest}. We will use a slight variant of the estimate of Combes-Thomas type developed by M. Aizenman in the context of localization for discrete random Schr\"odinger operators:
\begin{lemma}[M. Aizenman, \cite{aizenman}, Lemma II.1]\label{aizlemma}
Let $\Gamma$ be a countable set with a metric $d: \Gamma\times \Gamma \rightarrow [0,\infty)$, and let $H$ be a self-adjoint operator on $\ell^2(\Gamma)$ whose off-diagonal elements are exponentially summable:
\begin{equation}\label{eq: Salpha}
S_\alpha =\sup_x \sum_{y\neq x} |H(x,y)|e^{\alpha d(x,y)}<\infty.
\end{equation}
Then, for energies not in the spectrum of $H$, with
\[\Delta = \operatorname{dist}(z,\sigma(H))>0,\]
we have
\[\Big|(H-z)^{-1}_{xy}\Big| \le \frac{2}{\Delta}\exp\left(-\frac{\alpha\Delta}{\Delta+2S_\alpha}\cdot d(x,y)\right).\]
\end{lemma}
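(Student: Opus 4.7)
The plan is to implement the classical Combes--Thomas argument in the form used by Aizenman: conjugate $H$ by an exponential weight centered at the target index, convert the resulting non-self-adjoint conjugate into a small additive perturbation of $H$, and extract the decay from the conjugation factor. Fix $y\in\Gamma$ and, for a parameter $\eta\ge 0$ to be chosen, introduce the (formal) diagonal multiplication operator $M_\eta(u,v)=\delta_{uv}\,e^{\eta d(u,y)}$. Set $H_\eta:=M_\eta H M_\eta^{-1}$, whose matrix elements are
\[H_\eta(u,v)=H(u,v)\,e^{\eta(d(u,y)-d(v,y))}.\]
Writing $H_\eta=H+K_\eta$, the off-diagonal correction $K_\eta(u,v)=H(u,v)(e^{\eta(d(u,y)-d(v,y))}-1)$ is controlled via the reverse triangle inequality $|d(u,y)-d(v,y)|\le d(u,v)$, together with $|e^{s}-1|\le e^{|s|}-1$ and the elementary convexity estimate
\[e^{\eta t}-1\le\frac{\eta}{\alpha}(e^{\alpha t}-1),\qquad 0\le\eta\le\alpha,\;t\ge 0,\]
which gives the entrywise bound $|K_\eta(u,v)|\le(\eta/\alpha)|H(u,v)|(e^{\alpha d(u,v)}-1)$.

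Using self-adjointness of $H$ so that row and column sums of $|H|$ coincide, the Schur test then yields the operator-norm estimate $\|K_\eta\|\le\eta S_\alpha/\alpha$. I would next control $(H_\eta-z)^{-1}$ by the second resolvent identity and a Neumann series: $(H_\eta-z)^{-1}=(H-z)^{-1}(I+K_\eta(H-z)^{-1})^{-1}$, valid as long as $\|K_\eta\|\cdot\|(H-z)^{-1}\|<1$. Since $\|(H-z)^{-1}\|\le 1/\Delta$, this bound holds whenever $\eta S_\alpha<\alpha\Delta$, and one gets $\|(H_\eta-z)^{-1}\|\le 1/(\Delta-\eta S_\alpha/\alpha)$.

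The form of the exponent in the conclusion is then dictated by optimizing over $\eta$ subject to the twin constraints $\eta\le\alpha$ and $\eta S_\alpha<\alpha\Delta$. The choice $\eta=\alpha\Delta/(\Delta+2S_\alpha)$ automatically satisfies $\eta\le\alpha$, and a direct computation gives
\[\Delta-\frac{\eta S_\alpha}{\alpha}=\frac{\Delta(\Delta+S_\alpha)}{\Delta+2S_\alpha}\ge\frac{\Delta}{2},\]
so that $\|(H_\eta-z)^{-1}\|\le 2/\Delta$. Undoing the conjugation, $(H-z)^{-1}_{xy}=e^{-\eta d(x,y)}(H_\eta-z)^{-1}_{xy}$ (the weight at $y$ is trivial since $d(y,y)=0$), and taking absolute values yields the stated inequality.

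The one mild technical point is justifying the conjugation by the possibly unbounded weight $M_\eta$ on $\ell^2(\Gamma)$ when $\Gamma$ is infinite. The cleanest workaround is to replace $d(\cdot,y)$ by $d(\cdot,y)\wedge R$, repeat the argument with the resulting bounded conjugation (all estimates above being uniform in $R$), and pass $R\to\infty$ at the end; alternatively, one can phrase the entire argument directly at the level of matrix elements. In the application in this paper, $\Gamma=\{1,\ldots,m\}$ is finite and the question does not arise.
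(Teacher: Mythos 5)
Your proof is correct and is precisely the Combes--Thomas conjugation argument that underlies Aizenman's Lemma II.1; the paper itself does not reproduce a proof, but simply cites Aizenman and remarks that the slightly different definition of $S_\alpha$ used here requires no change to the argument, a point your derivation confirms (only $e^{\alpha d(u,v)}-1\le e^{\alpha d(u,v)}$ is needed in bounding $\|K_\eta\|$). The conjugation by $M_\eta$, the Schur-test estimate $\|K_\eta\|\le \eta S_\alpha/\alpha$, the Neumann-series bound on $(H_\eta-z)^{-1}$, the choice $\eta=\alpha\Delta/(\Delta+2S_\alpha)$, and your truncation-by-$R$ fix for the possibly unbounded weight on infinite $\Gamma$ are all sound and match the reference.
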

Note that the hypothesis \eqref{eq: Salpha} differs from the one in Lemma II.1 in \cite{aizenman}, but the same proof works also for Lemma \ref{aizlemma} as stated. 

To bound \eqref{eq: oneterm}, we can assume that $i_j=i_j'$. Indeed, the bound \eqref{eq: offdiag} only changes by a constant factor when the index $i$ is changed by a unit.
Consider the sum over $i_1$ in \eqref{eq: oneterm}:
\[\sum^m_{i_1=1}|R_{i_1i_2}||R_{i_{k+1}i_1}|.\]
We split this sum into three regions:
\[\left(\sum_{|i_1-i_2|\ge m^{1/4}}+ \sum_{|i_1-i_2|\le m^{1/4}, |i_1-i_{k+1}|\ge m^{1/4}}+\sum_{|i_1-i_2|\le m^{1/4}, |i_1-i_{k+1}|\le m^{1/4}}\right)|R_{i_1i_2}||R_{i_{k+1}i_1}|.\]
By \eqref{eq: offdiag}, the first two sums are bounded by
\[Cm^{1/2}\exp(-m^\epsilon).\]
We are reduced to considering the sum
\begin{equation}
\label{eq: firstsum}
\sum_{|i_1-i_2|\le m^{1/4}}|R_{i_1i_2}||R_{i_1i_{k+1}}|.
\end{equation}
The second factor in each summand of \eqref{eq: firstsum} is estimated as follows:
\begin{equation}
|R_{i_1i_{k+1}}|=\Big| \sum_{i=1}^m\frac{u_i(i_1)u_i(i_{k+1})}{\lambda_i-z}\Big| \le \left(\sum^m_{i=1}\frac{|u_i(i_1)|^2}{|\lambda_i-z|}\right)^{1/2} \left(\sum^m_{i=1}\frac{|u_i(i_{k+1})|^2}{|\lambda_i-z|}\right)^{1/2}. \label{eq: CS}
\end{equation}
The first factor on the right side of the inequality \eqref{eq: CS} is no greater than $m^{1/8-\epsilon/2}$, since $\|u_i\|=1$ and 
\[\frac{1}{|\lambda_i-z|}\le (\Im z)^{-1}.\]
In summary, \eqref{eq: firstsum} is bounded by
\[m^{1/8-\epsilon/2} \left(\sum_{i=1}\frac{|u_i(i_{k+1})|^2}{|\lambda_i-z|}\right)^{1/2} \cdot \sum_{|i_1-i_2|\le m^{1/4}}|R_{i_1i_2}| \le m^{1/8+1/2-(3/2)\epsilon}\cdot\left(\sum_{i=1}\frac{|u_i(i_{k+1})|^2}{|\lambda_i-z|}\right)^{1/2}.\]
We proceed to sum over $i_2$ in \eqref{eq: oneterm}:
\[\sum_{i_2}|R_{i_2i_3}|\le \sum_{|i_2-i_3|\le m^{1/4} }|R_{i_2i_3}|+C\le Cm^{1/4}\cdot m^{1/4-\epsilon}.\]
We can now repeatedly sum over $i_3, \ldots, i_{k-1}$, using \eqref{eq: offdiag} at every step. Each summation results in an additional factor of $m^{1/2-\epsilon}$. Thus \eqref{eq: oneterm} is bounded by
\begin{equation}\label{eq: sumkminus2}
C^{k-2}m^{1/8-\epsilon/2} \cdot m^{(k-1)/2-(k-1)\epsilon}\sum_{i_k} \left(\sum^m_{i=1}\frac{|u_i(i_k)|^2}{|\lambda_i-z|}\right)^{1/2} \cdot \sum_{|i_k-i_{k+1}|\le m^{1/4}}|R_{i_{k}i_{k+1}}|.
\end{equation}
Using the Cauchy-Schwarz inequality as in \eqref{eq: CS}, we have
\[|R_{i_{k}i_{k+1}}| \le m^{1/8-\epsilon/2}\cdot\left(\sum_{i=1}\frac{|u_i(i_{k+1})|^2}{|\lambda_i-z|}\right)^{1/2}.\]
By \eqref{eq: approx} and Corollary \ref{cor:ESYY},
\[\sum_{i_{k+1}}\sum_{i=1}\frac{|u_i(i_{k+1})|^2}{|\lambda_i-z|} = \sum_{i} \frac{1}{|\lambda_i-z|} \le Cm\log m.\]
Inserting the previous two inequalities into \eqref{eq: sumkminus2}, we find that the sum \eqref{eq: oneterm} is no greater than
\[C^k\log m \cdot m^{(k+2)/2-k\epsilon}.\]
Choosing $c$ smaller than $\epsilon/2$ in \eqref{eq: deltasmall}, this last quantity is $O(m^{-k\epsilon/2})$ with overwhelming probability. As for the final term in \eqref{eq: expansion}, it is bounded by
\[m^{-(k+2)(1/2-c)}\sum_{1\le i_1,\ldots i_k \le m}|R_{i_1i_2}||R_{i_2',i_3}| \cdots |R_{i'_{k-1}i_k}|\cdot m^{1/4-\epsilon},\]
since
\[|R^{\beta,d}_{ij}(z)|\le m^{1/4-\epsilon}\]
for each $i$ and $j$, provided $\Im z > m^{1/4-\epsilon}$. Performing each of the $k$ sums using \eqref{eq: offdiag} as previously, we find that the sum is bounded by
\[C^k m^{-k\epsilon/2}\cdot m^{1/4},\]
with overwhelming probability. Letting $k$ in \eqref{eq: expansion} be larger than $8/\epsilon$, we obtain
\begin{equation}\label{eq: onequarter}
s_\beta(z) =s_\infty(z) +O(m^{-\epsilon/2}).
\end{equation}
with overwhelming probability. Combined with the approximation \eqref{eq: approx}, the relation \eqref{eq: onequarter} implies Proposition \ref{onequarterprop}.

\section{Convergence for $\beta=\infty$}
\label{sec: infty}
In this section, we identify the limit of the resolvent
\begin{equation}\label{eqn: ratio}
s_\infty(z) = \frac{1}{m}\sum_{j=1}^m \frac{1}{\lambda_j-z} = \frac{1}{m}\frac{\left(\tilde{L}^{\alpha}_m(mz/d)\right)'}{\tilde{L}^\alpha_m(mz/d)}
\end{equation}
for $z$ as close as $m^{-1/2+\epsilon}$ to the limiting spectrum, but away from its edges. The eigenvalues $\lambda_i$ are given by \eqref{eqn: ev-zero}. In \cite{sosoewong}, the corresponding quantity for Hermite polynomials was estimated using asymptotics for these orthogonal polynomials in the complex plane. Here, we use a differential equation for $s_\infty(z)$ derived from the ODE satisfied by Laguerre polynomials (\cite[(22.6.15)]{astegun}, \cite[Section 4.5, (4.5.1)]{bealswong}: 
\begin{equation}
\label{eqn: 2ndorderode}
x(\tilde{L}^\alpha_m)''(x)+(\alpha+1-x)(\tilde{L}_m^\alpha)'(x)+m\tilde{L}_m^\alpha(x)=0,
\end{equation}
with $\alpha= m(1/d-1)$.
Differentiating the ratio on the right of \eqref{eqn: ratio} and using \eqref{eqn: 2ndorderode}, we find an equation for $s_\infty(z)$ (see \cite[Section 2.2.2, Eqn. (20)]{edelmandumitriu2}),
\begin{equation}
zd(s_\infty(z))^2 +s_\infty(z)\left(d-1+z\right)+1+\frac{d}{m}s_\infty(z)+\frac{dz}{m}s_\infty(z)'=0.
\end{equation}
To solve the equation approximately, we treat the final two terms
\[\mathbf{\varepsilon} =  \frac{d}{m}s_\infty(z)+\frac{dz}{m}s_\infty(z)'\]
as error terms, and use the rough estimates
\[|s_\infty(z)| \le m^{1/2-\epsilon},
|s_\infty'(z)| \le m^{1-2\epsilon}.\]
These follow at once from $|\lambda_i-z|\ge m^{1/2-\epsilon}$, so that $\mathbf{\varepsilon} = O(m^{-\epsilon})$. 
For $z$ in the region 
\[\{\Im z>m^{-1/2+\epsilon}, |z-\lambda_{\pm}|>\kappa, |z|\le 4\},\] 
we find the solutions
\[s^\pm(z) =\frac{z+d-1\pm \sqrt{(z-\lambda_+)(z-\lambda_-)}}{2dz}+O_{\kappa}(m^{-\epsilon}).\]
The solution $s^-$ satisfies $\Im s^->0$ for $\Im z>0$, and we conclude that the right side of the equation is an approximation for $s_{\beta,d}$:
\begin{equation}\label{eq: approx}
s_\infty(z) =s_{MP,d}(z)+O_{\kappa}(m^{-\epsilon}).
\end{equation}
We expect that the error term can be replaced by $O(1/m)$, but \eqref{eq: approx} is sufficient for our purposes.
\section{Inductive argument}
\label{sec: inductive}
In this section, we improve the convergence of the density on short scales from the level $m^{-1/4+\epsilon}$ to the optimal level of $m^{-1+\epsilon}$ by an inductive argument.

\begin{theorem} \label{thm:main}
Let $s(z)=s_{\beta,d}(z)$ be the Stieltjes transform of the measure induced by the eigenvalues of the normalized $\beta$-Laguerre ensemble matrix $M_{\beta,d}$. Let $s_{MP,d}(z)$ be the Stieltjes transform of the Marchenko-Pastur distribution. Then, with overwhelming probability,
\begin{equation}
\sup_{z\in D_{\epsilon,\kappa}}|s(z)-s_{MP,d}(z)| = o(1)
\end{equation}
where the domain $D$ is defined as
\[D_{\epsilon,\kappa} := \{z: \Im z > m^{-1+\epsilon}, \lambda_-+\kappa < \Re z < \lambda_+-\kappa, |z| \leq 4\}.\]
\end{theorem}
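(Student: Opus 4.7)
The plan is to run an inductive bootstrap on the imaginary part of the spectral parameter, starting from a coarse scale where global convergence is easy, and progressively reducing $\Im z$ toward the optimal $m^{-1+\epsilon}$. Concretely, fix a small step $\gamma>0$ and define a decreasing sequence $\eta_0 = 1 > \eta_1 > \cdots > \eta_K = m^{-1+\epsilon}$ with $\eta_{k+1} = m^{-\gamma}\eta_k$ and $K = \lceil(1-\epsilon)/\gamma\rceil$. The induction hypothesis at step $k$ is that
\[ \sup_{z \in D_{\epsilon,\kappa},\, \Im z \ge \eta_k} |s(z) - s_{MP,d}(z)| = o(1) \]
with overwhelming probability. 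The base case $k=0$ is obtained by combining the global Marchenko-Pastur law of \cite{dumitriuth} with standard concentration estimates for the $\chi^2$-distributed entries, giving uniform convergence of $s(z)$ to $s_{MP,d}(z)$ at macroscopic scale.

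For one inductive step I would apply the Schur complement identity to the tridiagonal matrix $M_{\beta,d}$: deleting row and column $j$ produces a minor $M^{(j)}$ whose eigenvalues interlace those of $M_{\beta,d}$, and yields
\[ R^{\beta,d}_{jj}(z) = \bigl((M_{\beta,d})_{jj} - z - \xi_j(z)\bigr)^{-1}, \]
where $\xi_j$ is a quadratic form in the two chi-distributed off-diagonal entries adjacent to row $j$, weighted by corner entries of $(M^{(j)}-z)^{-1}$. Averaging over $j$ produces an approximate self-consistent equation for $s(z)$ whose stable fixed point, by the computation of Section \ref{sec: infty}, is $s_{MP,d}(z)$. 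The induction hypothesis at scale $\eta_k$ enters twice: it gives $\Im s(z) = O(1)$, which transfers to $(1/m)\Im\operatorname{tr}(M^{(j)}-z)^{-1}$ by interlacing and controls the conditional expectation $\mathbb{E}[\xi_j \mid M^{(j)}]$; and it keeps the stability constant of the MP equation uniformly away from zero. Concentration of the $\chi^2_r$ entries (with exponential tails) combined with the quadratic-form bound $\sum_k |(M^{(j)}-z)^{-1}_{kk}|^2 \le m \Im s(z)/\Im z$ then gives $|\xi_j - \mathbb{E}[\xi_j \mid M^{(j)}]| \le (m\eta_k)^{-1/2 + o(1)}$ with overwhelming probability, and the stability of the MP equation away from the edges converts this into an improved bound on $|s(z) - s_{MP,d}(z)|$ valid down to the smaller scale $\eta_{k+1}$.

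Two technical points merit care. To pass from pointwise in $z$ to the uniform supremum, I would invoke Lipschitz continuity of $s$ and $s_{MP,d}$ with constant $O(\eta_{k+1}^{-2})$, discretize $D_{\epsilon,\kappa}$ on a polynomially dense grid, and take a union bound; overwhelming probability is preserved at each step. The main obstacle will be calibrating the step ratio $\eta_{k+1}/\eta_k = m^{-\gamma}$: $\gamma$ must be small enough that the a priori bound at scale $\eta_k$ produces a genuine polynomial improvement when fed into the self-consistent equation at scale $\eta_{k+1}$, yet $K$ must remain bounded independently of $m$ so that the accumulated union bound over all iterations does not destroy overwhelming probability. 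The fixed spectral cushion $\kappa > 0$ keeps $|\sqrt{(\lambda_+ - z)(z - \lambda_-)}|$, and hence the MP stability constant, bounded below throughout the iteration, so the stability step is uniform in $k$ — which is precisely why the theorem is stated with the edge parameter $\kappa$.
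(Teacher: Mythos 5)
Your overall bootstrap architecture (start coarse, tighten $\Im z$ geometrically, discretize and union-bound to upgrade to a supremum) matches the paper's inductive strategy in spirit, and you are right that $\kappa>0$ keeps the stability constant of the Marchenko--Pastur equation bounded below. But the engine of each inductive step, as you describe it, does not work for a tridiagonal matrix, and the ingredient that actually makes the paper's step go through is missing from your proposal.

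The problem is with the self-consistent equation obtained by averaging the Schur complement over $j$. For a Wigner-type matrix, deleting row and column $j$ yields $\xi_j = \sum_{k,l\ne j} H_{jk}(M^{(j)}-z)^{-1}_{kl}H_{lj}$, a quadratic form in a vector of length $m-1$; concentration of this form around $\frac{1}{m}\operatorname{tr}(M^{(j)}-z)^{-1}\approx s(z)$ is exactly what closes the equation, and the Ward-identity bound $\sum_k|R^{(j)}_{jk}|^2 = \Im R^{(j)}_{jj}/\Im z$ is what makes the fluctuations small. Here $M_{\beta,d}$ is tridiagonal, so $H_{jk}\ne 0$ only for $k=j\pm1$, and $\xi_j$ reduces to the sum of exactly two terms, $(M_{\beta,d})_{j,j-1}^2(M^{(j)}-z)^{-1}_{j-1,j-1}+(M_{\beta,d})_{j,j+1}^2(M^{(j)}-z)^{-1}_{j+1,j+1}$. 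There is no sum over $m$ entries, no law of large numbers, and the quadratic-form/Ward bound you invoke is vacuous. Moreover the diagonal of $M_{\beta,d}$ is not approximately constant: $\mathbb{E}(M_{\beta,d})_{jj}$ sweeps through $[1-d,\,1+d]$ as $j$ varies, so each $R_{jj}$ satisfies a genuinely $j$-dependent approximate equation and the individual $R_{jj}$ do not converge to $s_{MP,d}(z)$. Averaging these $j$-dependent equations does not collapse to the MP quadratic. A further, smaller issue is that the off-diagonal entries $(M_{\beta,d})_{j,j\pm1}$ share chi factors with entries of the minor $M^{(j)}$, so the conditional-expectation step you propose is not clean for general $j$.

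What the paper does instead is to use a structural fact special to the Dumitriu--Edelman tridiagonal models (Proposition \ref{prop:indep}): one can diagonalize $M_{\beta,d}=Q\Lambda Q^*$ so that the first row $q=(q_1,\ldots,q_m)$ of $Q$ is independent of $\Lambda$ and consists of normalized i.i.d.\ $\chi_\beta$ variables. This turns $R_{11}(z)=\sum_j q_j^2/(\lambda_j-z)$ into a genuine sum of $m$ weakly dependent terms with $\mathbb{E}_q q_j^2=1/m$, so a McDiarmid-type concentration argument, together with Lemma \ref{lem:absest}, shows $R_{11}(z)\approx s(z)$ with the right error. The Schur complement is then applied at the $(1,1)$ corner only, through the bidiagonal factorization $M_{\beta,d}=\frac{d}{\beta m}B_\beta B_\beta^t$ (equation \eqref{eqn:R11}): the variables $a_{11}$, $a_{21}$, and the minor $\widehat B_\beta$ are genuinely independent, and the same eigenvector-concentration argument identifies $\widehat R_{11}$ with $R_{11}$ up to $o(1)$ (equation \eqref{eq:suf1}). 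This is what closes the self-consistent equation for $R_{11}$ alone, after which Proposition \ref{prop:induct2} transfers the conclusion back to $s(z)$ at the finer scale. Without Proposition \ref{prop:indep} and the $R_{11}\leftrightarrow s(z)$ bridge, your inductive step has no concentration mechanism to lean on, and the argument as proposed does not close.
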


To prove Theorem \ref{thm:main}, we need three facts about the tridiagonal models and Stieltjes transforms. The first can be found in \cite{dumitriuth}:
\begin{proposition} \label{prop:indep}
One can diagonalize 
\[M_{\beta,d} = Q\Lambda Q^*\]
 such that the first row of $Q$ is independent of $\Lambda$ and consists of independent entries with $\chi_\beta$ distribution, normalized to unit norm.
\end{proposition}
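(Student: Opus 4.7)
The plan is to establish the factorization by analyzing the change of variables from the bi-diagonal entries of $B_\beta$ to the pair $(\Lambda, q)$, where $\Lambda$ is the ordered spectrum of $M_{\beta,d}$ and $q=(q_1,\dots,q_m)$ is the first row of the orthogonal eigenvector matrix $Q$, viewed as a point on the positive octant of the unit sphere.

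First, I would set up the underlying bijection using the spectral theory of Jacobi matrices. Given any discrete probability measure $\mu=\sum_{j=1}^m q_j^2 \delta_{\lambda_j}$ with $m$ distinct positive atoms and strictly positive weights $q_j>0$, Gram--Schmidt applied to $1,x,x^2,\dots$ in $L^2(\mu)$ produces a unique tridiagonal matrix with strictly positive sub-diagonal entries whose spectrum is $\{\lambda_j\}$ and whose $j$-th normalized eigenvector has first coordinate $q_j$. This provides a bijection between the space of admissible bi-diagonal matrices $B_\beta$ (equivalently, tridiagonal $M_{\beta,d}$ with positive off-diagonal) and pairs $(\Lambda,q)$ in the product of the positive Weyl chamber with $S^{m-1}_+$. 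The bijection is the reason the claim can be formulated in the first place: we only need to track densities on the two sides.

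Second, I would compute the Jacobian of this map. The standard Dumitriu--Edelman computation (or, equivalently, a Vandermonde identity for Jacobi matrices obtained from the three-term recurrence for the orthonormal polynomials of $\mu$) gives a Jacobian of the form
\[ J \;\propto\; \frac{\prod_{i<j}(\lambda_j-\lambda_i)}{\prod_j q_j}, \]
with an overall constant independent of both $\Lambda$ and $q$. In parallel, the joint density of the independent chi entries of $B_\beta$ is a product of factors $x^{r-1}e^{-x^2/2}$, and its exponential part collects into $\exp(-\tfrac{1}{2}\operatorname{tr}(B_\beta B_\beta^t))$, which depends only on $\Lambda$. The product of off-diagonal powers $x^{r-1}$ can be rewritten as $\prod q_j^{\beta-1}$ times a symmetric polynomial in $\Lambda$ using the standard identification of these products with entries of the Jacobi data.

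Third, I would verify factorization and identify the marginals. After multiplying by $J$ and regrouping, the joint density of $(\Lambda, q)$ splits as (a function of $\Lambda$ alone)$\times \prod_{j} q_j^{\beta-1}$ on $S^{m-1}_+$. The $\Lambda$-marginal must reproduce $f_{\beta,a}(\lambda)$ in \eqref{eq: density}, which can be checked directly or extracted by integrating out $q$ against the Dirichlet-type density $\prod q_j^{\beta-1}$. To see that the $q$-marginal coincides with the law of $(\xi_1,\dots,\xi_m)/\|\xi\|$ for $\xi_j$ i.i.d. $\chi_\beta$, I would apply polar coordinates to the i.i.d.\ chi vector: its joint density $\prod \xi_j^{\beta-1} \exp(-\tfrac{1}{2}\|\xi\|^2)$ separates into a radial $\chi_{m\beta}$ law and an angular density proportional to $\prod q_j^{\beta-1}$ on the sphere, matching the $q$-marginal above. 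Independence of $\Lambda$ and $q$ is then immediate from the product form of the joint law. The main obstacle is the bookkeeping in the Jacobian computation and the combinatorial matching of the $\beta$-dependent exponents; once these align, the factorization and hence Proposition \ref{prop:indep} follow.
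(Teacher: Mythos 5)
The paper itself supplies no proof of Proposition \ref{prop:indep}; it is stated as a result from Dumitriu's thesis \cite{dumitriuth}. Your sketch reconstructs precisely the Dumitriu--Edelman change-of-variables argument that underlies the cited result, so the overall route is the same: pass from the bidiagonal entries to the spectral data $(\Lambda,q)$, compute the Jacobian, verify that the joint density factorizes, and identify the angular marginal with a normalized vector of i.i.d.\ $\chi_\beta$'s by a polar decomposition. The final polar-coordinates step is correct and is exactly how one recognizes the density $\prod_j q_j^{\beta-1}$ on $S^{m-1}_+$ as that of a normalized $\chi_\beta$ vector. So the structure and conclusion are sound.

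One step as written is imprecise. The Jacobian of the spectral map from the Jacobi entries $(a,b)$ of $M_{\beta,d}$ to $(\Lambda,q)$ is $\prod_i b_i / \prod_j q_j$, not $\prod_{i<j}(\lambda_j-\lambda_i)/\prod_j q_j$. The Vandermonde factor appears only after combining that Jacobian with the powers of the off-diagonal entries coming from the chi densities (and the Jacobian of the Cholesky map $B_\beta \mapsto M_{\beta,d}$) and then invoking the classical identity
\[
\prod_{i=1}^{m-1} b_i^{\,m-i} \;=\; \prod_{i<j}|\lambda_i-\lambda_j|\cdot \prod_{j=1}^{m} q_j .
\]
You flag exactly this bookkeeping as ``the main obstacle,'' which is fair; once the exponents of the $b_i$'s are tracked through, the $\lambda$- and $q$-dependence separate as you state, yielding independence of the first row of $Q$ from $\Lambda$ and the claimed angular law. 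So: same approach as the reference the paper cites, correct conclusion, with one intermediate formula (the form of the Jacobian) that should be corrected before the sketch is a proof.
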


The next corollary establishes the link between control of Stieltjes transform and control of the eigenvalue density. See for example \cite{erdosyau5}:
\begin{corollary} \label{cor:ESYY}
Let $z = E + i\eta$ and $m^{-1+\epsilon} \leq \eta \leq \frac{1}{2}E$, $\frac{1}{2}\lambda_- \leq E \leq 10$ and $\epsilon,\tau > 0$.  Suppose that one has the bound
\begin{displaymath}
|s_{\beta,d,m}(z) - s_{MP,d}(z)| \leq \tau
\end{displaymath}
with overwhelming probability for all such $z$.  Then for any interval $I$ in $[\lambda_-+\epsilon,\lambda_+-\epsilon]$ with $|I|\geq \eta$, one has
\begin{displaymath}
\left|N_I - n\int_I \rho_{MP,d}(y) \,\mathrm{d}y\right| \le C_\epsilon \tau n|I|
\end{displaymath}
with overwhelming probability, where $N_I$ denotes the number of eigenvalues in $I$.
\end{corollary}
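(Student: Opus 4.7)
The approach is to extract $N_I$ from the imaginary part of $s_{\beta,d,m}$ by Poisson-kernel smoothing. Writing $P_\eta(x) = \pi^{-1}\eta/(x^2+\eta^2)$ and letting $\mu_n$ denote the normalized spectral measure of $M_{\beta,d}$, we have the identities
\[\frac{1}{\pi}\Im s_{\beta,d,m}(E+i\eta) = (P_\eta * \mu_n)(E), \qquad \frac{1}{\pi}\Im s_{MP,d}(E+i\eta) = (P_\eta * \rho_{MP,d})(E),\]
so the hypothesis asserts $|(P_\eta * \mu_n)(E) - (P_\eta * \rho_{MP,d})(E)| \le \tau/\pi$ with overwhelming probability, uniformly in $E$ on the relevant range. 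For $I = [a,b]$ with $|I| \ge \eta$, integrating this bound over $E \in I$ and applying Fubini yields
\[\left| \int \phi_I(x)\,d\mu_n(x) - \int \phi_I(x)\,\rho_{MP,d}(x)\,dx \right| \le \tau |I|/\pi,\]
where $\phi_I(x) = \int_I P_\eta(x-E)\,dE = \pi^{-1}[\arctan((b-x)/\eta) - \arctan((a-x)/\eta)]$ is an $\eta$-smoothed characteristic function of $I$ that agrees with $\mathbf{1}_I$ up to an error of magnitude $O(\eta/\mathrm{dist}(x, \partial I))$ away from the endpoints.

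The remaining step is to replace $\phi_I$ by $\mathbf{1}_I$ on both sides. On the Marchenko-Pastur side, this introduces an error of $O(\eta)$ since $\rho_{MP,d}$ is bounded in the bulk $[\lambda_-+\epsilon, \lambda_+-\epsilon]$. On the empirical side, the error is bounded by the $\mu_n$-mass in dyadic shells of width $\sim 2^k\eta$ around $\partial I$, weighted by $O(2^{-k})$ from the decay of $\phi_I - \mathbf{1}_I$ in the $k$-th shell. To control each shell's mass we apply the Poisson estimate a second time: for a window $J$ of length $\eta$ centered at $E_0$ in the bulk, $P_\eta(x-E_0) \gtrsim 1/\eta$ on $J$, hence
\[\mu_n(J) \le C\eta\,(P_\eta * \mu_n)(E_0) \le C\eta\bigl((P_\eta * \rho_{MP,d})(E_0) + \tau/\pi\bigr) = O(\eta),\]
using boundedness of the MP Poisson integral. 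Summing over the $O(\log(1/\eta))$ shells from distance $\eta$ out to distance $O(1)$ from $\partial I$ produces a total boundary error of $O(\eta \log(1/\eta))$.

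Combining the two ingredients gives $|N_I/n - \int_I \rho_{MP,d}\,dy| \le \tau|I|/\pi + O(\eta\log(1/\eta))$; multiplying by $n$ and absorbing the boundary term using $|I|\ge\eta$ yields the stated bound (in the regime where $\tau$ dominates $\eta/|I|$, which is the relevant one for the application). The main obstacle is the polynomial (rather than rapid) decay of the Poisson kernel, which makes $\phi_I$ leak mass well outside $I$ and forces the iterative application of the hypothesis at concentric scales to close the estimate; an alternative route would use a Helffer--Sj\"ostrand representation with a smooth cutoff of $\mathbf{1}_I$ at scale $\eta$, but the bookkeeping of the boundary-mass contribution is essentially the same.
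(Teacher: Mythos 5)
The paper does not prove this corollary itself but cites \cite{erdosyau5}; your argument follows the standard Poisson-kernel-smoothing route used in that literature, so the overall method is the right one. The identity $\frac{1}{\pi}\Im s_n(E+i\eta)=(P_\eta*\mu_n)(E)$, the Fubini step producing $\int\phi_I\,d\mu_n$, and the dyadic-shell mass bound $\mu_n(J)=O(\eta)$ deduced from $\Im s_n(E_0+i\eta)=O(1)$ are all fine. One small inaccuracy: summing your shells gives a boundary error of size $O(\eta(1+\log(|I|/\eta)))$, not $O(\eta\log(1/\eta))$ --- only the shells at scales between $\eta$ and $|I|$ each contribute $\Theta(\eta)$, while shells beyond $|I|$ give a geometric series summing to $O(\eta)$, and shells outside the bulk are handled by $\mu_n(\mathbb{R})=1$ together with the decay of $\phi_I$.

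The genuine gap is in the final absorption step. You claim the boundary error can be ``absorbed using $|I|\ge\eta$'' in ``the regime where $\tau$ dominates $\eta/|I|$, which is the relevant one for the application.'' Neither claim holds. The constraint $|I|\ge\eta$ by itself does not make $\eta(1+\log(|I|/\eta))\lesssim\tau|I|$: at the allowed extreme $|I|\asymp\eta$ the boundary term is $\Theta(\eta)=\Theta(|I|)$, which swamps $\tau|I|$ for any $\tau=o(1)$. And the application to Theorem \ref{thm1} sits exactly at this extreme on its face: there $|I|=2m^{-1+\epsilon}$ while Theorem \ref{thm:main} supplies the hypothesis only down to $\Im z\ge m^{-1+\epsilon}$, so $\eta/|I|=\tfrac12$ and $\tau=o(1)$ certainly does not dominate $\eta/|I|$. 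As written, your bound $\tau|I|/\pi+O(\eta\log(|I|/\eta))$ therefore does not yield the stated conclusion $|N_I-n\int_I\rho_{MP,d}|\le C_\epsilon\tau n|I|$ in the borderline case $|I|\asymp\eta$ that the corollary permits.

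The fix is to exploit that the hypothesis is uniform in $\eta$ over a whole range (and, in the application, that Theorem \ref{thm:main} holds for every $\epsilon>0$). Given $|I|\sim m^{-1+\epsilon}$, smooth at a strictly finer scale $\eta'=m^{-1+\epsilon'}$ with $0<\epsilon'<\epsilon$, say $\epsilon'=\epsilon/2$. Then $\eta'/|I|\sim m^{-(\epsilon-\epsilon')}\to0$, so the boundary term $O(\eta'(1+\log(|I|/\eta')))$ is $o(\tau|I|)$ for every fixed $\tau>0$ once $m$ is large, and the conclusion follows. You should state this choice of $\eta'$ explicitly; without it the last step of your argument fails.
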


Finally, we shall need the following standard result. A proof can be found in \cite{sosoewong}:
\begin{lemma} \label{lem:absest}
Suppose \emph{the Marchenko-Pastur distribution holds at level $m^a$} for some $-1 < a \le 0$, that is, for any $c>0$, 
\[|s(z)-s_{MP,d}(z)|<c\]
for sufficiently large $m$, with overwhelming probability.

Then we have
\begin{equation}
\frac{1}{m}\sum_j \frac{1}{|\lambda_j-z|^2} \leq C (\Im z)^{-2}m^a \log m
\end{equation}
for any $z$ such that $m^{-1} < \Im z < m^a$.
\end{lemma}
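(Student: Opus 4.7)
The plan is to reduce the target bound to a pointwise upper bound on $\Im s(z)$ via the identity
\[
\frac{1}{m}\sum_j \frac{1}{|\lambda_j-z|^2} = \frac{\Im s(z)}{\Im z},
\]
which follows term-by-term from $\Im\bigl[1/(\lambda-z)\bigr] = \Im z/|\lambda-z|^2$. With this identity in hand it will suffice to show that $\Im s(E+i\eta) \le C \eta^{-1} m^a \log m$ for $m^{-1} < \eta < m^a$ and $E$ in the bulk, with overwhelming probability.

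The key deterministic observation I would invoke is the monotonicity of
\[
g(\eta) := \eta\, \Im s(E+i\eta) = \frac{1}{m}\sum_j \frac{\eta^2}{(\lambda_j-E)^2+\eta^2}
\]
in $\eta > 0$: differentiating under the sum gives $g'(\eta) = \frac{1}{m}\sum_j 2\eta(\lambda_j-E)^2/[(\lambda_j-E)^2+\eta^2]^2 \ge 0$. Consequently $\eta_1 \le \eta_2$ implies $\Im s(E+i\eta_1) \le (\eta_2/\eta_1)\, \Im s(E+i\eta_2)$, and I would apply this with $\eta_1 = \eta$ and the reference scale $\eta_2 = m^a$.

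At $\eta_2 = m^a$ the lemma's hypothesis kicks in: with overwhelming probability $|s(E+im^a) - s_{MP,d}(E+im^a)|$ is smaller than any prescribed $c>0$ for $m$ large. Since $\Im s_{MP,d}$ is uniformly bounded on the bulk strip $\lambda_-+\kappa < \Re z < \lambda_+-\kappa$ (it extends continuously to the real axis with boundary value $\pi\rho_{MP,d}(E)$), this gives $\Im s(E+im^a) \le C$ with overwhelming probability. Monotonicity then yields $\Im s(E+i\eta) \le C m^a/\eta$, and substituting back into the identity of the first paragraph produces $\frac{1}{m}\sum_j 1/|\lambda_j-z|^2 \le C m^a/\eta^2$.

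The argument is essentially mechanical once one spots the monotonicity of $\eta\, \Im s(E+i\eta)$ and identifies $\eta_2 = m^a$ as the reference scale where the Marchenko-Pastur input is available, so I do not anticipate any substantial obstacle. Note that the approach above produces the cleaner bound $C m^a/\eta^2$, without the $\log m$ factor appearing in the statement; the extra $\log m$ is presumably slack to accommodate an alternative proof via dyadic decomposition in $|\lambda_j - E|$ that invokes Corollary \ref{cor:ESYY} on each dyadic shell, where the logarithm would arise from the number of shells.
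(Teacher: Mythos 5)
Your argument is correct, and cleaner than what the lemma's statement suggests was originally done. The two ingredients you use are both sound: the Ward-type identity
\[
\frac{1}{m}\sum_j \frac{1}{|\lambda_j - z|^2} \;=\; \frac{\Im s(z)}{\Im z},
\]
and the deterministic monotonicity of $\eta \mapsto \eta\,\Im s(E+i\eta) = \frac{1}{m}\sum_j \eta^2/\bigl((\lambda_j - E)^2+\eta^2\bigr)$, each summand of which is nondecreasing in $\eta$. Taking $\eta_2 = m^a$ as the reference scale and invoking the hypothesis there (together with the fact that $\Im s_{MP,d}$ is bounded uniformly on the bulk strip $\lambda_- + \kappa < \Re z < \lambda_+ - \kappa$, since it has continuous boundary values $\pi\rho_{MP,d}(E)$ on the real axis and decays as $\Im z \to \infty$) gives $\Im s(E+im^a) \le C$ with overwhelming probability, hence $\Im s(E+i\eta) \le C m^a/\eta$ for $\eta \le m^a$ on the same high-probability event. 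This yields $\frac{1}{m}\sum_j |\lambda_j - z|^{-2} \le C m^a/\eta^2$, which is strictly stronger than the stated bound.

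The paper does not reprint a proof of this lemma but defers to \cite{sosoewong}. Your monotonicity route is a genuinely different (and more economical) argument than the dyadic-shell decomposition that the $\log m$ in the statement implicitly points to: in a dyadic argument one slices $\{j : |\lambda_j - E| \sim 2^k m^a\}$, bounds the cardinality of each shell via Corollary~\ref{cor:ESYY}, and sums, and it is easy for a $\log m$ to appear as slack from the number of shells or a crude bound on the innermost one. Your approach circumvents all counting; the $\log m$ is superfluous. One minor point worth stating explicitly: the monotonicity argument transfers information from a single reference height $\eta_2 = m^a$ down to $\eta$ \emph{pathwise}, i.e., on the very event where the hypothesis holds at $\eta_2$, so there is no loss in probability in passing from one scale to the other. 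No gaps.
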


The proof of Theorem \ref{thm:main} is a combination of the following propositions:
\begin{proposition} \label{prop:induct1}
Suppose the Marchenko-Pastur law holds at level $m^a$ for some $-1 < a \le 0$, and that we have
\begin{equation}\label{eq: R11approxbigoh}
|R^{\beta,d}_{11}(z) - s_{MP,d}(z)| = O(1)
\end{equation}
for $\Im z> m^a$ with overwhelming probability. Then, for any $\delta>0$, 
\begin{equation}
|R^{\beta,d}_{11}(z) - s_{MP,d}(z)| = o(1)
\end{equation}
holds for $z$ such that $\Im z > m^{(a-1)/2+\delta}$ with overwhelming probability.
\end{proposition}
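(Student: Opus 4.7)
The plan is to leverage Proposition \ref{prop:indep}: if $M_{\beta,d} = Q\Lambda Q^*$ with the first row of $Q$ of the form $(Q_{1j}) = (\chi_j/\sqrt{S})$ for i.i.d.\ $\chi_\beta$ random variables $\chi_j$ (independent of $\Lambda$) and $S = \sum_k \chi_k^2$, then
\[
R_{11}^{\beta,d}(z) - s_{\beta,d}(z) = \Bigl(\frac{\beta m}{S} - 1\Bigr) s_{\beta,d}(z) + \frac{1}{S}\sum_{j=1}^m \frac{\chi_j^2 - \beta}{\lambda_j - z}.
\]
The first term is easy: $S$ is $\chi^2$ with $\beta m$ degrees of freedom, so $|\beta m/S - 1| = O(m^{-1/2+\epsilon})$ with overwhelming probability.

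The main step is to bound the centered linear statistic. Conditional on $\Lambda$, it is a weighted combination of i.i.d.\ centered sub-exponential variables $\chi_j^2 - \beta$ with weights $(\lambda_j - z)^{-1}/S$ of size at most $(\beta m \, \Im z)^{-1}$. Its conditional variance is of order $(\beta m)^{-2}\sum_j |\lambda_j - z|^{-2}$, which, by Lemma \ref{lem:absest} applied with the hypothesized Marchenko--Pastur law at level $m^a$, is at most $C m^{a-1}(\Im z)^{-2}\log m$ with overwhelming probability; at the target scale $\Im z > m^{(a-1)/2+\delta}$ this is $O(m^{-2\delta}\log m)$. To promote $L^2$ control to an overwhelming-probability bound, I would apply Bernstein's inequality for sums of weighted sub-exponential random variables conditional on $\Lambda$ and union-bound with the event on which Lemma \ref{lem:absest} holds; at threshold $t = m^{-\delta/4}$ both regimes of the Bernstein tail decay faster than any inverse polynomial in $m$.

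Combining these ingredients gives $R_{11}^{\beta,d}(z) = s_{\beta,d}(z) + o(1)$ at fixed $z$. The hypothesized $O(1)$ bound \eqref{eq: R11approxbigoh} combined with the Marchenko--Pastur law at level $m^a$ implies that $s_{\beta,d}(z)$ stays close to $s_{MP,d}(z)$ in the relevant range, which lets the multiplicative $o(1)$ factor on $s_{\beta,d}$ be absorbed into the additive error and yields $|R_{11}^{\beta,d}(z) - s_{MP,d}(z)| = o(1)$. Uniformity over the stated domain follows from a standard discretization: both sides are Lipschitz in $z$ with constant polynomial in $m$, so a grid of polynomially many points plus a union bound suffices. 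The principal obstacle is the sub-exponential concentration step --- Chebyshev would give only power-law tails from the variance bound, and the scale $m^{(a-1)/2+\delta}$ is exactly what is needed for the trade-off between the variance and the maximum weight $(\Im z)^{-1}$ to force both Bernstein branches to give stretched-exponential decay in $m$, which is precisely the mechanism by which each induction step improves the scale from $m^a$ to $m^{(a-1)/2+\delta}$.
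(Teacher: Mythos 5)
Your concentration step is essentially correct and closely parallels the mechanism in the paper (the paper uses a McDiarmid-type inequality, Proposition~\ref{mcdiarmid}, rather than Bernstein, but either works): conditional on the eigenvalues, $R_{11}^{\beta,d}-s_{\beta,d}$ is a weighted sum of centered independent variables whose conditional variance is controlled by Lemma~\ref{lem:absest}. This correctly yields $R_{11}^{\beta,d}(z)=s_{\beta,d}(z)+o(1)$ for $\Im z>m^{(a-1)/2+\delta}$.

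The gap is the final step. You then assert that ``the hypothesized $O(1)$ bound combined with the Marchenko--Pastur law at level $m^a$ implies that $s_{\beta,d}(z)$ stays close to $s_{MP,d}(z)$ in the relevant range.'' This is not available. The hypothesis gives $|s_{\beta,d}(z)-s_{MP,d}(z)|=o(1)$ only for $\Im z>m^a$, but your conclusion needs it at the strictly smaller scale $\Im z>m^{(a-1)/2+\delta}$; closeness of the empirical Stieltjes transform to $s_{MP,d}$ does not propagate downward in $\Im z$ by any monotonicity argument. Improving the scale of $s_{\beta,d}-s_{MP,d}$ is precisely the content of Proposition~\ref{prop:induct2}, which in the inductive scheme takes the \emph{conclusion} of Proposition~\ref{prop:induct1} as input — so invoking it here is circular. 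What you have actually proved is the concentration ingredient of Proposition~\ref{prop:induct2}, not Proposition~\ref{prop:induct1}.

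The paper's proof avoids this by never passing through $s_{\beta,d}$ at the target scale. It uses the Schur complement identity \eqref{eqn:R11} to derive a self-consistent approximate quadratic relation \eqref{eqn:func2} for $R_{11}^{\beta,d}$ alone (the same quadratic that characterizes $s_{MP,d}$), controls the error terms using the MP law at $m^a$ (via Lemma~\ref{lem:absest}), the concentration of $a_{11}^2,a_{21}^2$, interlacing for $\widehat{R}_{11}$ vs.\ $R_{11}$, and the $O(1)$ bound on $R_{11}$ propagated via $|R_{11}(z)|\le \tfrac{\Im w}{\Im z}|R_{11}(w)|$. Solving the quadratic then gives $R_{11}\approx s_{MP,d}$ directly. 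The self-consistent equation is the mechanism that actually improves the scale of $R_{11}$; the concentration argument by itself only transfers control between $R_{11}$ and $s_{\beta,d}$ at the same scale and therefore cannot, on its own, push past $m^a$.
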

 For a given sequence $X_n$ of random variables, we write
\[X_n = o(1)\]
\emph{with overwhelming probability} if, for every $c>0$, the event $\{|X_n|\le c\}$ has overwhelming probability.
\begin{proposition} \label{prop:induct2}
Suppose that with overwhelming probability, $|R^{\beta,d}_{11}(z) - s_{MP,d}(z)| = o(1)$ for $z$ such that  $\Im z > m^a$ for some $-1 < a \le 0$, then we have an improved Marchenko-Pastur law, that is:
\begin{equation}
\left|\frac{1}{m} \sum_j \frac{1}{\lambda_j-z} - s_{MP,d}(z)\right| = o(1)
\end{equation}
for $z$ such that $\Im z > m^{(a-1)/2+\delta}$ for any $\delta > 0$ with overwhelming probability.
\end{proposition}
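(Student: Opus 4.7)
The plan is to use the chi-squared concentration of the first row of the diagonalizing matrix $Q$ to compare $R_{11}^{\beta,d}(z)$ with the Stieltjes transform $s(z)$, and then combine this with the hypothesis and with Proposition \ref{prop:induct1} to complete the Marchenko-Pastur comparison at the target scale. By Proposition \ref{prop:indep}, the first row of $Q$ is independent of $\Lambda$ and the squared entries $(|Q_{1j}|^2)_j$ have the symmetric Dirichlet distribution with parameter $\beta/2$. Writing
\[R_{11}^{\beta,d}(z) = \sum_{j=1}^{m}\frac{|Q_{1j}|^2}{\lambda_j - z}\]
and noting that $\mathbb{E}[|Q_{1j}|^2 \mid \Lambda] = 1/m$ by exchangeability, the conditional mean of $R_{11}^{\beta,d}(z)$ given $\Lambda$ equals $s(z)$, so the task reduces to controlling the fluctuations of a linear functional of a Dirichlet vector.

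A direct computation from the Dirichlet covariance structure (variances of order $1/m^3$ and negative covariances of the same order, with cancellations) yields the clean bound
\[\mathrm{Var}_{Q\mid\Lambda}\!\left[R_{11}^{\beta,d}(z)\right] \le \frac{C}{m^2}\sum_{j=1}^{m}\frac{1}{|\lambda_j - z|^2},\]
and a high-moment version (Markov applied to high powers of the polynomial $R_{11}^{\beta,d} - s$ in the underlying independent $\chi_\beta$ variables) promotes this into overwhelming-probability deviation estimates, first on a polynomially fine $z$-grid and then on all of $D_{\epsilon,\kappa}$ by Lipschitz continuity. To make the variance small at the target scale, I first extract Marchenko-Pastur at level $m^a$ from the hypothesis: applying the same variance argument at $\eta_0 = m^a$ with the crude input $\sum|\lambda_j-z|^{-2} \le m/\eta_0^2$ gives $|R_{11}^{\beta,d}-s| = o(1)$ at $\eta_0$ when $a > -1/2$, and a short bootstrap in $\eta$ using the monotonicity of $\eta \mapsto \eta\,\Im s(E+i\eta)$ to propagate $L^\infty$ bounds on $\Im s$ down from larger scales handles the remaining case $a$ close to $-1$. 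The triangle inequality with the hypothesis $|R_{11}^{\beta,d}-s_{MP,d}| = o(1)$ at $\Im z > m^a$ then produces $|s(z)-s_{MP,d}(z)| = o(1)$ at $\Im z > m^a$.

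With Marchenko-Pastur at level $m^a$ available, Lemma \ref{lem:absest} provides $\frac{1}{m}\sum|\lambda_j - z|^{-2} \le C(\Im z)^{-2} m^a \log m$ for $m^{-1} < \Im z < m^a$. Substituting into the variance bound, for $\Im z > m^{(a-1)/2+\delta}$ one obtains $\mathrm{Var}_{Q\mid\Lambda}[R_{11}^{\beta,d}(z)] \le C m^{-2\delta}\log m$, and the high-moment concentration gives $|R_{11}^{\beta,d}(z) - s(z)| = o(1)$ with overwhelming probability at the target scale. To convert this into the desired Marchenko-Pastur estimate for $s$, I invoke Proposition \ref{prop:induct1}: its hypotheses (Marchenko-Pastur at $m^a$, plus the trivial $O(1)$ bound on $R_{11}^{\beta,d}-s_{MP,d}$ at $\Im z > m^a$ coming directly from the assumption) are now in place, yielding $|R_{11}^{\beta,d}(z) - s_{MP,d}(z)| = o(1)$ for $\Im z > m^{(a-1)/2+\delta}$. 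The triangle inequality
\[|s(z) - s_{MP,d}(z)| \le |s(z) - R_{11}^{\beta,d}(z)| + |R_{11}^{\beta,d}(z) - s_{MP,d}(z)| = o(1)\]
then closes the argument.

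The main obstacle is the bootstrap used to establish Marchenko-Pastur at $m^a$ when $a$ is close to $-1$, since the crude input $\sum|\lambda_j-z|^{-2} \le m/\eta_0^2$ is then too weak to drive the variance-to-concentration step on its own. The fix is to run the concentration at a scale larger than $m^a$ (where the trivial bound does work), apply Lemma \ref{lem:absest} to propagate the resulting Marchenko-Pastur estimate, and iterate scale-by-scale down to $m^a$, giving a short self-contained version of the outer induction of Section \ref{sec: inductive}.
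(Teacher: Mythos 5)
Your proposal is correct and rests on the same two ingredients as the paper's proof --- the concentration of the linear functional $\sum_j q_j^2/(\lambda_j-z)$ around its $\Lambda$-conditional mean $s(z)$ (paper: Proposition \ref{mcdiarmid}; you: a moment/net argument, which is equivalent), and Lemma \ref{lem:absest} to control $\sum_j |\lambda_j-z|^{-2}$ once a Marchenko--Pastur estimate is available at some scale. So the core of what you do is the paper's argument.

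Where you go beyond the paper's three-sentence proof is in resolving a scale mismatch that the paper glosses over. The paper writes ``by the assumption it suffices to establish $|s-R_{11}^{\beta,d}|=o(1)$,'' invoking the triangle inequality; but the hypothesis controls $R_{11}^{\beta,d}-s_{MP,d}$ only for $\Im z>m^a$, while the conclusion is claimed for $\Im z>m^{(a-1)/2+\delta}$, so the triangle inequality does not close as written. You address this by (i) first extracting $|s-s_{MP,d}|=o(1)$ at level $m^a$ via concentration plus a scale-by-scale bootstrap (needed only when $a$ is close to $-1$, where the crude input $\sum_j|\lambda_j-z|^{-2}\le m/\eta^2$ is too weak), and then (ii) explicitly invoking Proposition \ref{prop:induct1} to push the $R_{11}^{\beta,d}$ bound down to the target scale before applying the triangle inequality there. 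This is a legitimate and complete route, and (ii) is a good catch. An alternative, closer in spirit to how the propositions are actually alternated in the proof of Theorem \ref{thm:main}, is to read the hypothesis as being available at the target scale already (it is the output of the preceding application of Proposition \ref{prop:induct1}); then the triangle inequality closes directly and no internal call to Proposition \ref{prop:induct1} is needed. Either reading is fine, and your version is self-contained.

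One small slip: you describe the symmetric Dirichlet($\beta/2$) weights as having \emph{variances} of order $1/m^3$; in fact $\mathrm{Var}(q_j^2)\asymp 1/m^2$ while the pairwise covariances are $O(1/m^3)$ and negative. Your displayed variance bound $\mathrm{Var}_{Q\mid\Lambda}[R_{11}^{\beta,d}]\le Cm^{-2}\sum_j|\lambda_j-z|^{-2}$ is nonetheless correct and is all that the rest of the argument uses.
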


\begin{proof} [Proof of Theorem \ref{thm1}]
The condition \eqref{eq: R11approxbigoh} holds trivially if $\Im z \ge 1$. The result follows by repeatedly applying Propositions \ref{prop:induct1} and \ref{prop:induct2}.
\end{proof}

What remains is the proof of the two propositions above:
\begin{proof} [Proof of Proposition \ref{prop:induct1}]
By Schur's complement, we have the following relation
\begin{equation} \label{eqn:R11}
R^{\beta,d}_{11}(z) = \frac{1}{a^2_{11}-z-a^2_{11}a^2_{21}\frac{\widehat{R}_{11}^{\beta,d}}{1+a^2_{21}\widehat{R}^{\beta,d}_{11}}}.
\end{equation}
Here $a_{11}$ denotes the normalized $(1,1)$-entry of the bidiagonal matrix $B_\beta$, $a_{2,1}$ the normalized $(2,1)$-entry of $B_\beta$, $\widehat{R}^{\beta,d}$ is the resolvent of $\widehat{M_\beta}:=\frac{d}{\beta m}\widehat{B_\beta}\widehat{B_\beta}^t$ where $\widehat{B_\beta}$ is formed by removing the first row and column of $B_\beta$.  We remark that $a^2_{11}$ is distributed as $\frac{d}{\beta m}\chi_{2a}$ and $a^2_{21}$ distributed like $\frac{d}{\beta m}\chi_{\beta(m-1)}$. Lastly, $a_{11}$, $a_{21}$ and $\widehat{R}$ are independent. 

By the argument in Section \ref{sec: infty}, we need only show that $R^{\beta,d}_{11}$ satisfies the approximate functional equality
\begin{equation} \label{eqn:func}
R^{\beta,d}_{11}(z) + \frac{1}{d-1+z+zd\cdot R^{\beta,d}_{11}(z)} = o(1),
\end{equation}
with overwhelming probability for $z$ such that $\Re z \in [\lambda_-+\kappa, \lambda_+ - \kappa]$ and $\Im z \geq m^{(a-1)/2+\delta}$, with $\delta > 0$ arbitrary.
We first note that, due to the restriction on $\Re z$ (in particular, $\Re z > \kappa$ since $\lambda_- > 0$), equation \eqref{eqn:func} is equivalent to:
\begin{equation} \label{eqn:func2}
zd \cdot (R_{11}^{\beta,d}(z))^2 + (d-1+z)\cdot R_{11}^{\beta,d}(z) + 1 = o(1)
\end{equation}
where from now on we suppress the superscript $\beta,d$.

Rewriting equation \eqref{eqn:R11}, we have
\begin{equation}
z a^2_{21}\cdot \widehat{R_{11}}^{\beta,d}R_{11}^{\beta,d} + zR_{11}^{\beta,d} +a^2_{21}\widehat{R_{11}}^{\beta,d} - a^2_{11}R_{11}^{\beta,d} + 1 = 0.
\end{equation}
We can further rewrite the above as
\begin{multline}
zd\cdot \frac{m-1}{m}\cdot \widehat{R_{11}}^{\beta,d}R_{11}^{\beta,d} + zR_{11}^{\beta,d} + d\frac{m-1}{m}\cdot\widehat{R_{11}}^{\beta,d} - R_{11}^{\beta,d} + 1\\
 + \left(z\cdot\widehat{R_{11}}^{\beta,d}R_{11}^{\beta,d}+\widehat{R_{11}}^{\beta,d}\right)\left(a^2_{21}-d\frac{m-1}{m}\right) - (a^2_{11}-1)\cdot R_{11}^{\beta,d} = 0.
\end{multline}

It suffices, therefore, to show that
\begin{equation} \label{eq:suf1}
\frac{m-1}{m}\cdot \widehat{R_{11}}^{\beta,d} = R_{11}^{\beta,d} + o(1)
\end{equation}
and
\begin{equation} \label{eq:suf2}
 \left(z\cdot\widehat{R_{11}}^{\beta,d}R_{11}^{\beta,d}+\widehat{R_{11}}^{\beta,d}\right)\left(a^2_{21}-d\frac{m-1}{m}\right) - (a^2_{11}-1)\cdot R_{11}^{\beta,d} = o(1)
\end{equation}
for the set of $z$ we are interested in.

We first note that $a^2_{21}$ has distribution $\frac{d}{\beta m} \chi^2_{\beta(m-1)}$ and so 
\[\Big|a^2_{21}-d\frac{m-1}{m}\Big| \le C m^{-1/2+\delta}\]
with overwhelming probability for all $\delta > 0$.  Similarly, $a^2_{11}$ has the distribution of $\frac{d}{\beta m} \chi^2_{\beta m/d}$ and so 
\[|a^2_{11}-1| \le C m^{-1/2+\delta}\]
with overwhelming probability for all $\delta > 0$. 
The assumption of the proposition implies that for all $w$ with $\Im w> m^a$,  we have
\[|R_{11}^{\beta,d}(w)| \leq C,\] 
for some constant $C$. By \eqref{eq:suf1}, which will be proved independently below, we also have 
\[|\widehat{R_{11}}^{\beta,d}(w)|\le C.\] 
The inequality
\[|R_{11}^{\beta,d}(z)| \leq \frac{w}{z} |R_{11}^{\beta,d}(w)|,\]
for $\Im z \leq \Im w$, now implies that approximation \eqref{eq:suf2} holds with overwhelming probability whenever $\Im z > m^{(a-1)/2+\delta}$.

We turn our attention to \eqref{eq:suf1}.  Firstly, we write
\[\widehat{R}_{11}^{\beta,d} = \sum_j \frac{\widehat{q}_j^2}{\widehat{\lambda}_j-z}, \]
and similarly
\[R_{11}^{\beta,d} = \sum_j \frac{q_j^2}{\lambda_j-z}, \]
where $(\widehat{q}_1,\hdots\widehat{q}_{m-1})$ is the first row of the eigenvectors for $\widehat{M_\beta}$, $\widehat{\lambda}_j$ are the eigenvalues and $(q_1,\hdots,q_{m})$ is the first row of eigenvectors for $M_\beta$, and $\lambda_j$ the eigenvalues.

We write
\begin{equation} \label{eqn:compare}
\begin{split}
R_{11}^{\beta,d}-\frac{m-1}{m}\cdot \widehat{R}_{11} &= \sum_j\frac{q_j^2}{\lambda_j-z}-\mathbb{E}_q\sum_j\frac{q_j^2}{\lambda_j-z}\\  
&+ \mathbb{E}_q\sum_j\frac{q_j^2}{\lambda_j-z} - \frac{m-1}{m}\cdot\mathbb{E}_{\widehat{q}}\sum_j\frac{\widehat{q}_j^2}{\widehat{\lambda}_j-z}\\
&+ \frac{m-1}{m}\cdot \left(\mathbb{E}_{\widehat{q}}\sum_j\frac{\widehat{q}_j^2}{\widehat{\lambda}_j-z} - \sum_j\frac{\widehat{q}_j^2}{\widehat{\lambda}_j-z}\right),
\end{split}
\end{equation}
where $\mathbb{E}_X$ denotes the expectation with respect to the random variables $X$.

By Proposition \ref{prop:indep}, $q$ and $\lambda_j$ are independent and so are $\hat{q}$ and $\hat{\lambda}_j$. We write the first term in \eqref{eqn:compare} as
\[\sum_j\frac{q_j^2}{\lambda_j-z}-\mathbb{E}_q\sum_j\frac{q_j^2}{\lambda_j-z} = \sum_j\frac{q_j^2-\frac{1}{m}}{\lambda_j-z}.\]
Since $q$ and $\lambda_j$ are independent, we can condition on $\lambda_j$ and apply Proposition \ref{mcdiarmid} below to the sum to conclude that the right side of the previous equation is bounded with overwhelming probability by 
\[\frac{1}{m^{1-c}}\left(\sum_j\frac{1}{|\lambda_j-z|^2}\right)^{1/2}.\] By Lemma \ref{lem:absest}, the latter is bounded by $C (\Im z)^{-1}m^{-1/2+a/2+c} \log m$ for some constant $C$ and any $c > 0$. The fifth and sixth terms of equation (\ref{eqn:compare}) are similarly bounded. To deal with the middle two terms, we use the interlacing property of the eigenvalues of a matrix and its minor, and then the interlacing property of the eigenvalues of a matrix and a rank $1$ perturbation, to obtain that these terms are bounded by 
\[\frac{1}{m\Im z}.\]

We end the proof of Proposition \ref{prop:induct1} with the following simple variant of McDiarmid's inequality, a proof of which can be found in the appendix of \cite{sosoewong}:
\begin{proposition}\label{mcdiarmid}
Let $X_1,\hdots,X_m$ be independent subgaussian random variables. Let $\Omega\subset \mathbb{R}^n$ be such that
\[(X_1,\ldots, X_m) \in \Omega\]
with overwhelming probabibility. Let  $F$ be a real function of $m$ real variables such that if $x_1,\hdots,x_m,\tilde{x_i} \in \Omega$, then  
\begin{displaymath}
|F(x_1,\hdots,x_n) - F(x_1,\hdots,x_{i-1},\tilde{x_i},x_{i+1},\hdots,x_n)| \leq c_i
\end{displaymath}
for all $1\leq i \leq m$, and outside of $\Omega$, $F$ is bounded by a polynomial in $m$.  Then for any $\lambda > 0$, one has
\begin{displaymath}
\mathbb{P}(|F(X)-\mathbb{E}(F(X))| \geq \lambda \sigma) \leq C \exp(-c\lambda^2)
\end{displaymath}
for some absolute constants $C$,$c > 0$, and $\sigma = \sum_{i=1}^mc_i^2$.
\end{proposition}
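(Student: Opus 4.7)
The plan is to reduce to the classical McDiarmid (bounded differences) inequality by replacing $F$ with a surrogate $\tilde F$ that coincides with $F$ on $\Omega$ and satisfies the bounded differences property on \emph{all} of $\mathbb R^m$ with the same constants $c_i$. Once such a $\tilde F$ is in hand, the standard martingale argument gives a Gaussian tail for $\tilde F(X)$, and it only remains to pay a negligible price to transfer the bound to $F(X)$.

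The first step is to construct $\tilde F$ by a McShane-type extension:
\[
\tilde F(x) := \inf_{y \in \Omega}\Bigl[F(y) + \sum_{i=1}^m c_i\,\mathbf 1\{x_i \neq y_i\}\Bigr].
\]
One checks from the definition that $|\tilde F(x) - \tilde F(x')| \le c_i$ whenever $x$ and $x'$ differ only in coordinate $i$ (any near-minimizer for one point is a candidate for the other, at a cost of at most $c_i$). Moreover, for $x \in \Omega$ the infimum is attained at $y = x$, so $\tilde F = F$ on $\Omega$, provided the bounded differences hypothesis can be iterated along coordinate-swap paths inside $\Omega$. This is automatic when $\Omega$ is a Cartesian product of good events (e.g. $\{\chi_{r_i} \le m^c\}$ for all $i$), which is the situation in every application of the proposition in this paper.

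Next, apply the classical Doob martingale / Azuma-Hoeffding argument to $\tilde F(X)$. The martingale differences $M_k - M_{k-1}$ of $M_k = \mathbb E[\tilde F(X) \mid X_1, \ldots, X_k]$ are bounded by $c_k$ a.s., which yields
\[
\mathbb P\bigl(|\tilde F(X) - \mathbb E \tilde F(X)| \ge \lambda\sigma\bigr) \le 2\exp(-c\lambda^2),
\]
with $\sigma = \sum c_i^2$ as in the statement. To transfer this to $F$, note that $\{F(X) \ne \tilde F(X)\} \subset \{X \notin \Omega\}$ has faster-than-polynomial decay, and the polynomial bound on $F$ outside $\Omega$ combined with a similar polynomial bound on $\tilde F$ gives
\[
|\mathbb E F(X) - \mathbb E \tilde F(X)| \le \mathbb E\bigl[|F - \tilde F|\,\mathbf 1_{\Omega^c}\bigr] = o(m^{-k})
\]
for every $k$, so the difference in the centering is negligible. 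A union bound against $\Omega^c$ then produces the stated tail inequality for $F(X)$.

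The main obstacle is ensuring $\tilde F \equiv F$ on $\Omega$: the hypothesis only constrains $F$ for coordinate swaps that keep both endpoints in $\Omega$, and if $\Omega$ is geometrically irregular the infimum defining $\tilde F$ could strictly undercut $F$ at some interior point. For general $\Omega$ one needs either to restrict to a product-shaped subset of $\Omega$ (sacrificing a little probability but preserving overwhelming probability thanks to the subgaussian assumption) or to replace the McShane extension with a direct conditioning on the event that each $X_i$ lies in its marginal projection of $\Omega$. Either way the subgaussian concentration of the $X_i$ is what makes the construction work, and this is the point that needs the most care.
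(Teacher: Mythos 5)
Your McShane-type extension
\[
\tilde F(x) := \inf_{y \in \Omega}\Bigl[F(y) + \sum_{i=1}^m c_i\,\mathbf 1\{x_i \neq y_i\}\Bigr]
\]
is the right device, and the reduction to the classical Azuma--Hoeffding/bounded-differences argument is clean; you have also correctly located the crux, namely whether $\tilde F\equiv F$ on $\Omega$. That identity is exactly the global Lipschitz bound $F(x)-F(y)\le\sum_i c_i\,\mathbf 1\{x_i\ne y_i\}$ for all $x,y\in\Omega$, which the one-coordinate hypothesis only yields after \emph{chaining} through intermediate points that remain in $\Omega$. When $\Omega$ is a Cartesian product this chaining is automatic, and in every use of the proposition in this paper (conditioning on the eigenvalues and truncating the independent $\chi$-variables to a window around their means) the good event is indeed a product, so your argument does establish the result in the form in which it is actually needed.

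However, the fallback you sketch for general $\Omega$ does not close the gap. A set of overwhelming probability need not contain a product subset even of positive probability, so ``restricting to a product-shaped subset of $\Omega$'' is not always possible. Subgaussianity does supply a canonical product truncation $\Omega'=\prod_i\{|x_i|\le C\sqrt{\log m}\}$ of overwhelming probability, but the bounded-difference hypothesis on $\Omega'$ is not implied by the one on $\Omega$ unless $\Omega'\subset\Omega$, which is an additional assumption. An honest statement must therefore either take $\Omega$ to be a product, or add a coordinate-swap connectedness hypothesis; as written, the extension step is the one place where the proof is not self-contained. A minor further point: when transferring from $\tilde F$ back to $F$, the centering correction $|\mathbb E F-\mathbb E\tilde F|\le\mathrm{poly}(m)\cdot\mathbb P(\Omega^c)$ is only harmless because in the applications $\sigma$ is at worst inverse-polynomially small, so the correction can be absorbed into the threshold $\lambda\sigma$; this should be recorded. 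The paper itself refers to the appendix of \cite{sosoewong} for the proof, so a line-by-line comparison is not available here, but the extension-plus-martingale route you take is the standard one and, once the product structure of $\Omega$ is made explicit, it is complete.
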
 
\end{proof}

\begin{proof} [Proof of Proposition \ref{prop:induct2}]
The proof is similar to that of Proposition \ref{prop:induct1}.  By the assumption, it suffices to establish that
\begin{displaymath}
\left|\frac{1}{m}\sum_j\frac{1}{\lambda_j-z}-R_{11}^{\beta,d}(z)\right| = o(1)
\end{displaymath}
with overwhelming probability.  The difference inside the absolute value sign is $\sum_j\frac{\frac{1}{m}-q^2_j}{\lambda_j-z}$. The statement now follows by another concentration argument and Lemma \ref{lem:absest}.
\end{proof}

\section{Some extensions}\label{sec: extensions}
The resolvent expansion in Section \ref{sec: expansion} allows for a quick proof of convergence of the eigenvalue distribution for tridiagonal matrices with independent entries ``close to'' a Jacobi matrix whose limiting spectral density is known. Consider a sequence of random tridiagonal matrices given by
\begin{equation}\label{eq: matrix}
A_n = \left( \begin{array}{ccccc}
a_n & b_{n-1} & & & \\
b_{n-1} & a_{n-1} & b_{n-2} & & \\
& & \ddots & & \\
& &  b_1   & a_1 & b_0\\
& & & b_0 & a_0
\end{array}
\right),
\end{equation}
where $a_j$, $b_j$, $1\le j \le n$. Suppose that
\begin{align} \label{eq: close}
\frac{b_j-\bar{b}_j}{n^\alpha} = o(1),\\
\frac{a_j-\bar{a}_j}{n^\alpha} = o(1)
\end{align}
uniformly in $1\le j\le n$ with overwhelming probabilty, for some $\alpha >0$, where
\begin{align*}
\bar{b}_j = \mathbb{E}b_j, \\
\bar{a}_j =\mathbb{E}a_j.
\end{align*}
If the deterministic tridiagonal matrix $\frac{\mathbb{E}A_n}{n^\alpha}$ has a limiting spectral density $\mu$, that is, if the spectral measures $\mu_n$ of $n^{-\alpha}\cdot \mathbb{E}A_n$ converge weakly to some measure $\mu$, then
\[\frac{1}{n}\operatorname{tr}\left(n^{-\alpha}\mathbb{E}A_n-z\right)^{-1} = \int\frac{1}{z-\lambda}\mu(\mathrm{d}\lambda) +o(1)\]
for $z$ in any compact subset $D$ of $\{\Im z> 0\}$. 
Using Lemma \ref{resolventest} as in the proof of Proposition \ref{onequarterprop} to perform a one-step resolvent rexpansion, we have, for $z\in D$:
\begin{equation}
\label{eq: littleoh}
\frac{1}{n}\operatorname{tr}\left(n^{-\alpha}A_n-z\right)^{-1} - \int\frac{1}{z-\lambda}\mu(\mathrm{d}\lambda = o(1)
\end{equation}
with overwhelming probability. Thus the eigenvalue distribution of $n^{-\alpha}A_n$ almost surely converges weakly to $\mu$.

 As an application, consider the following example from \cite{popescu}. Consider the Jacobi matrix
\[
J_n = \left( \begin{array}{ccccc}
0 & (n-1)^\alpha & & & \\
(n-1)^{\alpha} & 0 & (n-2)^\alpha & & \\
&(n-2)^\alpha & \ddots &  & \\
& &  &          & 2^\alpha\\
& & 2^\alpha   & 0 & 1\\
& & & 1 & 0
\end{array}\right).
\]
The moments of the rescaled matrix $n^{-\alpha}J_n$ converge to those of the \emph{Nevai-Ullman distribution} \cite{ullman}, \cite[Section 5]{krasovsky}:
\[\frac{1}{n}\operatorname{tr}(n^{-\alpha}J_n)^k\rightarrow \mu_k,\]
\begin{align*}
\mu_k &= \int x^k\,\nu_\alpha(\mathrm{d}x),\\
\nu_\alpha(\mathrm{d}x) &= \mathbf{1}_{[-2,2]}(x)\frac{1}{\alpha \pi}\int_{|x|/2}^2\frac{t^{-1+1/\alpha}}{\sqrt{4-t^2}}\,\mathrm{d}t.
\end{align*}
By a standard density argument, 
\[\frac{1}{n}\operatorname{tr} (n^{-\alpha}J_n-z)^{-1} = \int \frac{1}{\lambda-z}\,\nu_{\alpha}(\mathrm{d}\lambda)+o(1).\]
If we let
\begin{align*}
b_j &= (J_n)_{jj-1} + n^\beta X_j,\\
a_j &= (J_n)_{jj}+ n^\beta Y_j,
\end{align*}
where $X_j$, $1 \le j \le n-1$ and $Y_j$, $1\le j \le n$, are mean zero random variables whose $k$th moment is bounded uniformly in $j$ for all $k$, then for any $\beta<\alpha$ \eqref{eq: littleoh} holds for the matrix $n^{-\alpha}A_n$.
If we instead require that 
\begin{align}\label{eq: popescuhyp}
\mathbb{E}\left(\frac{b_n}{n^{\alpha}}\right)^k\rightarrow 1\\
\sup_n \mathbb{E}|a_n|^k <\infty.\nonumber
\end{align}
for $k=1,2$, then for any $c>0$, we have
\[\mathbb{P}\left(\left|\frac{b_j-1}{n^\alpha}\right|\ge c\right) \le \frac{1}{c^2}\mathbb{E}\left|\frac{b_j-1}{n^\alpha}\right|^2=o(1),\]
uniformly in $j$, and a similar bound for $a_j$. The convergence \eqref{eq: littleoh} with $\mu = \nu_\alpha$ holds in probability. Popescu \cite{popescu} assumes \eqref{eq: popescuhyp} holds for all $k$ and obtains convergence of the moments of $n^{-\alpha}A_n$ as well as almost sure convergence to the limiting distribution. Note that \cite{popescu} contains more general results that apply also to cases not easily accessible by our method.

Given more precise information on the size of the entries of $A_n$ or the rate of convergence of the Stieltjes transform for the deterministic model, one can improve on the above. To give a simple example, we introduce a ``positive temperature'' version of the Jacobi matrices associated to the orthogonal polynomials $p_n(x)$, $n\ge0$, for the measures 
\begin{equation} \label{eq: opmeasure}
 e^{-x^{2m}}\mathrm{d}x.
\end{equation}
Define the density 
\[g_{n,m}(x) = \mathbf{1}_{[0,\infty)}\frac{x^{n-1}e^{-x^{2m}}}{\frac{1}{2m}{\Gamma(\frac{n}{2m})}}.\]
Let the random variable $X_j$ have distribution given by $g_{j,m}$, with all the $X_j$ independent. We have
\[\mathbb{E} X_j = \frac{\Gamma\left(\frac{j+1}{2m}\right)}{\Gamma\left(\frac{j}{2m}\right)}=\frac{2^{-1/2m}}{e^{1/2m}}\left(\frac{j+1}{m}\right)^{1/2m}\cdot \left(1+\frac{1}{j}\right)^{j/2m}+O(1/j),\]
and, by a straightforward computation:
\[\mathbb{P}\left(|X_j-\mathbb{E}X_j|\ge n^\epsilon\right)\le Ce^{-n^\epsilon},\]
uniformly for $1\le j \le n$.
Consider the matrix \eqref{eq: matrix}, with entries
\begin{align*}
b_j &= \frac{e^{1/2m}}{2^{1+1/2m}}\cdot\left(1+\frac{1}{j}\right)^{j/2m} X_{j},\\
a_j &= N(0,1).
\end{align*}
Let $J_n=(m\alpha_m/n)^{1/(2m)}\bar{A}_n$, with $\bar{A}_n=\mathbb{E}A_n$, and
\[\alpha_m =\prod_{j=1}^m \frac{2j-1}{2j}.\]
The matrix $J_n$ approximates a Jacobi matrix associated with the orthogonal polynomials for the measure \eqref{eq: opmeasure}. Indeed, by \cite[Eq. (2.33)]{deiftetal}, we have
\[\tilde{b}_n=\frac{1}{2}\left(\frac{n+1}{\alpha_mm}\right)^{1/2m}+O(1/n^{2-1/2m}).\]
Here $\tilde{b}_n$ is the coefficient in the three-term recurrence relation:
\[xp_n(x)=\tilde{b}_{n+1}p_{n+1}(x)+\tilde{a}_np_n(x)+\tilde{b}_{n-1}p_{n-1}(x).\]
Note that $\tilde{a}_n=0$. The eigenvalues of the matrix
\[
\tilde{J}_n = \left(\frac{m\alpha_m}{n}\right)^{1/2m}\cdot \left( \begin{array}{ccccc}
0  & \tilde{b}_{n-1} & & & \\
\tilde{b}_{n-1} & 0 & \tilde{b}_{n-2} & & \\
&\tilde{b}_{n-2} & \ddots &  & \\
& &  &    \tilde{b}_1      & \\
& & \tilde{b}_1   & 0 & \tilde{b}_0\\
& & & \tilde{b}_0 & 0
\end{array}\right),
\]
are the (rescaled) zeros of the $n$th orthogonal polynomial with respect to \eqref{eq: opmeasure}. Their limiting density can be explicitly computed; see \cite[Eq. (2.4)]{deiftetal}. It has the form
\[\mu(\mathrm{d}x) = \frac{1}{2\pi}\sqrt{1-x^2}h_m(x)\mathbf{1}_{[-1,1]}(x)\,\mathrm{d}x,\]
where $h_m(x)$ is a polynomial of degree $2m-2$.
A calculation as in \cite{sosoewong} using the Riemann-Hilbert asymptotics in \cite{deiftetal} shows that the normalized trace of $(\tilde{J}_n-z)^{-1}$ approximates the Stieltjes transform of the limiting density $\mu(\mathrm{d}x)$ with precision $O(1/n)$ for $\Im z> n^{-1+\epsilon}$ and $z$ away from $\pm 1$. By the resolvent expansion argument of Section \ref{sec: expansion} and Corollary \ref{cor:ESYY}, convergence of the empirical eigenvalue distribution for $(\alpha_mn)^{-1/2m}A_n$  holds for intervals of size $|I|\ge n^{-1/4m+\epsilon}$ strictly inside the bulk of the limiting density.


\begin{thebibliography}{9}
\bibitem{aizenman} Aizenman, M., Localization at weak disorder: some elementary bounds, \emph{Rev. Math. Phys.} (1994), 6.
\bibitem{astegun} Abraomowitz, M., Segun, I., eds., \emph{Handbook of Mathematical Functions with Formulas, Graphs, and Mathematical Tables}, Dover Publications, 1964.
\bibitem{bealswong} Beals, R., Wong, R., \emph{Special Functions. A Graduate Text}, Cambridge studies in advanced mathematics \textbf{126}, Cambridge University Press, 2010.
\bibitem{maltsevschlein} Cacciapuoti, C., Maltsev, A., Schlein, B., \emph{Local Marchenko-Pastur Law at the Hard Edge of Sample Covariance Matrices}, preprint, arXiv:1206.1730.
\bibitem{deiftetal} Deift, P., et al. \emph{Strong asymptotics of orthogonal polynomials with respect to exponential weights.} Communications on Pure and Applied Mathematics \textbf{52}, 12, 1999.
\bibitem{dumitriuth} Dumitriu, I.: \emph{Eigenvalue statistics for beta ensembles.} Ph.D.
thesis, MIT, 2003.
\bibitem{edelmandumitriu} Dumitriu, I. and Edelman, A. \emph{Matrix models for beta ensembles.}, J. Math. Phys. \textbf{47}, 2006.
J. Math. Phys., \textbf{43}, 11, 5830-5847, 2002.
\bibitem{edelmandumitriu2} Dumitriu, I. and Edelman, A. \emph{Global spectrum fluctuations for the $\beta$-Hermite and
$\beta$-Laguerre ensembles via matrix models}. J. Math. Phys. \textbf{47}, 6, 2006.
\bibitem{erdosyau1} Erd\"os, L., Schlein, B.  and Yau, H.-T., \emph{Semicircle law on short scales and delocalization of eigenvectors for Wigner random matrices}, Ann. Prob. \textbf{37}, 815-852, 2009.
\bibitem{erdosyau2}  Erd\"os, L., Schlein, B.  and Yau, H.-T., \emph{Local semicircle law and complete delocalization for Wigner random matrices}, Comm. Math. Phys. \textbf{287}, 641-655, 2009.
\bibitem{erdosyau3} Erd\"os, L., Schlein, B.  and Yau, H.-T., \emph{Wegner estimate and level repulsion for Wigner random matrices}, IMRN \textbf{2010}, 436-479, 2009.
\bibitem{erdosyau4} Erd\"os, L.,  Schlein, B., Yau, H.-T. and Yin, J. : \emph{The local relaxation flow approach to universality of the local statistics for random matrices.} Ann. Inst. H. Poincar\'e B: Probab. Statist. \textbf{48}, 1–46, 2012.
\bibitem{erdosyau5} Erd\"os, L., Ramirez, J.A., Schlein, B., Yau, H.-T.. : \emph{Universality of Sine-Kernel for Wigner matrices with a small Gaussian perturbation.} Elec. J. Probab., \textbf{15}, 18, 2010.
\bibitem{erdosyau6} Erd\"os, L., Knowles, A., Yau, H.-T. : \emph{The Local Semi-circle Law for a General Class of Random Matrices}, preprint, arXiv:1212.0164.
\bibitem{krasikovquadratic} Krasikov, I., \emph{Nonnegative quadratic forms and bounds on orthogonal polynomials}, J. Approx. Th. \textbf{111}, 2001.
\bibitem{krasikovbound} Krasikov, I., \emph{Inequalities for orthonormal Laguerre polynomials}, J. Approx. Th. \textbf{144}, 2007.
\bibitem{krasovsky}  Krasovsky, I. V. Asymptotic distribution of zeros of polynomials satisfying difference equations. J. Comput. Appl. Math. 150 (2003), no. 1, 56–70.
\bibitem{marchenkopastur} Marcenko, V.A., Pastur, L., \emph{Distribution of eigenvalues in certain sets of random matrices.} Mat. Sb. (N.S.) \textbf{72} (\textbf{114}), 507–536, 1967.
\bibitem{popescu} Popescu, I., \emph{General Tridiagonal Random Matrix Models, Limiting Distributions and Fluctuations}, Prob. Theory Related Fields, \textbf{144}, 1-2, 2009.
\bibitem{sosoewong} Sosoe, P., Wong, P., \emph{Local semicircle law in the bulk for Gaussian $\beta$-ensembles}, J. Stat. Phys., \textbf{148}, 2, 204-232, 2012.
\bibitem{taovucov} Tao, T., Vu, V., \emph{Random covariance matrices: Universality of local statistics of eigenvalues}, Ann. Prob. \textbf{40}, 3, 2012.
\bibitem{ullman} Saff, E. B.; Ullman, J. L.; Varga, R. S. Incomplete polynomials: an electrostatics approach. Approximation theory, III (Proc. Conf., Univ. Texas, Austin, Tex., 1980), pp. 769–782, Academic Press, New York-London, 1980
\bibitem{wishart} Wishart, J., The generalised product moment distribution in samples from a normal multivariate population, \emph{Biometrika} \textbf{20A}, 1928.
\end{thebibliography}
\end{document}